\newtheorem{theorem}{Theorem}[section]
\newtheorem{lemma}[theorem]{Lemma}
\newtheorem{corollary}[theorem]{Corollary}
\theoremstyle{definition}
\theoremstyle{remark}
\newtheorem{remark}[theorem]{Remark}
\newcommand{\ind}{\textup{ind}}
\newcommand{\rank}{\textup{rank}}
\numberwithin{equation}{section}
\begin{document}

\title{Drazin inverses of the sum of four matrices and its applications }
\author{Daochang Zhang}
\address{College of Sciences, Northeast Electric Power University, Jilin, P.R. China.}
\email{daochangzhang@126.com}
\thanks{The first author is supported by the NSFC (No. 11371165; No. 11401249)
and the Scientific and Technological Development Program Foundation of Jilin
Province, China (No. 20170520052JH). The second author is supported by
the Ministry of Education and Science, Republic of Serbia (
No. 174007).}
\author{Dijana Mosi\' c}
\address{Faculty of Sciences and Mathematics, University of Ni\v s, P.O.
Box 224, 18000 Ni\v s, Serbia.} \email{dijana@pmf.ni.ac.rs}
\author{Li Guo}
\address{1. Department of Mathematics, Southeast University, Nanjing  210096, China.
2. School of Mathematics and  Statistics, Beihua University, Jilin  132013, China.} \email{guomingli95@163.com}

\subjclass[2000]{15A09; 15A30; 65F20}

\date{}


\keywords{Drazin inverse, pesudo-block matrix, pesudo-block decomposition}

\begin{abstract}
Our aim is to establish relations between Drazin inverses of the pesudo-block matrix $(P,Q,R,S)$  and
the block matrix $\left[\begin{smallmatrix}
   P&R\\S& Q
\end{smallmatrix}\right]$, where $R^2=S^2=0$. Based on the relations, we give representations for Drazin inverses of the sum $P+Q+R+S$ under weaker restrictions.
As its applications,
several expressions for Drazin inverses of a $2\times 2$ block matrix are presented under some assumptions. Our results generalize several results in the literature.
\end{abstract}

\maketitle

\section{introduction}
The Drazin inverse of a complex square matrix $A$ is the unique matrix $A^d$ such that
\begin{equation*}\label{defin1}
  AA^d=A^dA,~~~ A^dAA^d=A^d,~~~ A^{k}=A^{k+1}A^d,
\end{equation*}
 where $k$ is the smallest non-negative integer such that $\rank(A^{k}) = \rank(A^{k+1})$, called index of $A$ and denoted by $\ind(A)$. If $\ind(A)=1$, then $A^{d}$ is called the group inverse of $A$ and denoted by $A^{\#}$.
The Drazin inverse is a generalization of inverses and group inverses of matrices. There are widespread applications of Drazin inverses in various fields,
such as differential equations, control theory, Markov chains, iterative methods and so on
(see \cite{Israel2003,Eiermann1988,Hanke1994,Hartwig1981,Kirkland2004,Li2003,Meyer1980,Szyld1994,Wei2005,Wei2006}).

There have been many papers considering the problem of finding a
formula for the Drazin inverse of a $2 \times 2$ matrix in terms
of its various blocks, where the blocks on the diagonal are
required to be square matrices(see \cite{HLW,Meyer(1977)}).
This problem was proposed by Campbell and Meyer \cite{Campbell1991}, but it is
still an open problem to find an explicit formula for the Drazin
inverse of a block matrix without any restrictions upon the
blocks.

Let $\mathbb{C}^{m\times n}$ denote the set of all ${m\times n}$ complex matrices.
For $N\in \mathbb{C}^{n\times n}$, if there exist $P, Q, R,S \in \mathbb{C}^{n\times n}$ such that $N=P+Q+R+S$ and
$$PQ=QP=0,~PS=SQ=QR=RP=0,~R^d=S^d=0,$$
then the quadruple $(P,Q,R,S)$ is called a pseudo-block decomposition of $N$, and $N$ is called a pseudo-block matrix corresponding to $(P,Q,R,S)$ (see \cite{Chen2008}). In this case, we will simply say that $N=(P,Q,R,S)$ is a pseudo-block matrix.

In Section 2,
we establish relations between $\left[\begin{array}{cc}
        P & R \\
        S & Q
      \end{array}\right]^d$ and $(P+Q+R+S)^d$ for a pseudo-block decomposition $(P,Q,R,S)$ with $R^2=S^2=0$.
In Section 3, we give representations for $N^d$ under weaker restrictions and generalize some results of \cite{Chen2008}.
Furthermore, in Section 4, considering some matrix decompositions,
we apply $N^d$ to get some new expressions for the Drazin inverse of a $2\times 2$ block matrix under some conditions,
which extend several results of \cite{HLW,MDijcmEx,MDjrepgdAMC1,MDjamc2} and recover some result of \cite{DjS,MZCannalsFunc}
for the case of Drazin inverses.

Throughout this paper,
denote by $I$ the identity matrix of proper size, write $A^\pi$ for $I-AA^d$, and
let
$$M=\left[\begin{array}{cc}
                   A & B \\
                   C & D
                 \end{array}\right]\quad
                 {\rm and}\quad Z=D-CA^dB,$$
where $A\in\mathbb{C}^{m\times m}$, $B\in\mathbb{C}^{m\times n}$, $C\in\mathbb{C}^{n\times m}$ and $D\in\mathbb{C}^{n\times n}$.

For notational convenience, we define a sum to be 0, whenever its lower limit is bigger than its upper limit.
\section{relations between Drazin inverses of the pesudo-block matrix and the block matrix}

\begin{lemma}\label{mathematical induction}
Let $T=\left[\begin{array}{cc}
        P & R \\
        S & Q
      \end{array}\right]$ for $P,Q,R,S\in \mathbb{C}^{n\times n}$  such that $N=(P,Q,R,S)$ is a pseudo-block matrix with $R^2=S^2=0$.
If $T^r=\begin{bmatrix}
  A_1&A_3\\A_4&A_2
\end{bmatrix} $
for $A_1,A_2,A_3,A_4\in\mathbb{C}^{n\times n}$ and any positive integer $r$,
then $N^r=A_1+A_2+A_3+A_4$ and
$(A_1,A_2,A_3,A_4)$ is a pseudo-block matrix with $A_1Q=A_2P=A_1S=A_4Q=A_2R=A_3P=A_3^{2}=A_4^{2}=0$.
\end{lemma}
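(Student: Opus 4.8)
The plan is to argue by induction on $r$, as the lemma's name suggests. For the base case $r=1$ the blocks of $T$ are $A_1=P$, $A_2=Q$, $A_3=R$, $A_4=S$, so $N^1=P+Q+R+S$ and every asserted identity is literally one of the defining relations of the pseudo-block decomposition together with the standing hypotheses $R^2=S^2=0$; nothing needs to be computed. For the inductive step I assume the full list of relations at level $r$ and push them to level $r+1$.

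The engine of the step is that $T^{r+1}$ can be written in two ways, $T^{r+1}=T^rT$ and $T^{r+1}=T\,T^r$, giving two expressions for each new block:
\begin{align*}
A_1' &= A_1P+A_3S = PA_1+RA_4, & A_3' &= A_1R+A_3Q = PA_3+RA_2,\\
A_4' &= A_4P+A_2S = SA_1+QA_4, & A_2' &= A_4R+A_2Q = SA_3+QA_2.
\end{align*}
Call these the right-factored and left-factored forms. The key observation is that the only products among $P,Q,R,S$ I am entitled to kill are the eight interface products $PQ,QP,PS,SQ,QR,RP,R^2,S^2$. Hence, to verify a bilinear identity $X'Y'=0$ I insert the right-factored form of $X'$ and the left-factored form of $Y'$: each resulting summand has the shape $(\text{old block})\,(\text{ending letter})(\text{starting letter})\,(\text{old block})$, and for every required product the middle pair is always one of those eight vanishing products. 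For instance $A_1'A_2'=(A_1P+A_3S)(SA_3+QA_2)$ collapses because $PS,PQ,S^2,SQ$ all vanish, and $(A_3')^2=(A_1R+A_3Q)(PA_3+RA_2)$ collapses because $RP,R^2,QP,QR$ all vanish. The same bookkeeping disposes of the six single-letter conditions $A_1'Q=A_2'P=A_1'S=A_4'Q=A_2'R=A_3'P=0$ (right-factored form times one letter) and of the pseudo-block products $A_1'A_2'=A_2'A_1'=A_1'A_4'=A_4'A_2'=A_2'A_3'=A_3'A_1'=0$. Once the nilpotencies $(A_3')^2=0$ and $(A_4')^2=0$ are in hand, $(A_3')^d=(A_4')^d=0$ is immediate.

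It remains to propagate $N^{r+1}=A_1'+A_2'+A_3'+A_4'$. Here I expand $N^{r+1}=N^rN=(A_1+A_2+A_3+A_4)(P+Q+R+S)$ using the level-$r$ relations $A_1Q=A_1S=A_2P=A_2R=A_3P=A_4Q=0$, and compare with $A_1'+A_2'+A_3'+A_4'$ read off from the right-factored forms. The two sums agree except for the single discrepancy $A_3R+A_4S$. This is precisely where a fact not appearing in the statement is needed: $A_3R=0$ and $A_4S=0$. I carry these two auxiliary identities through the induction; they are maintained for free, since the right-factored forms give $A_3'R=A_1R^2+A_3(QR)=0$ and $A_4'S=A_4(PS)+A_2S^2=0$, and they hold at $r=1$ because $A_3R=R^2=0$ and $A_4S=S^2=0$.

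I expect the only real subtlety to be the bookkeeping in the inductive step: a single factorization of each block leaves genuinely nonzero cross terms such as $P^2$ or $Q^2$, so the proof hinges on systematically pairing the right-factored form of the left factor with the left-factored form of the right factor, so that every interface product lands among $PQ,QP,PS,SQ,QR,RP,R^2,S^2$. The auxiliary identities $A_3R=A_4S=0$, invisible in the statement but indispensable for $N^{r+1}=A_1'+A_2'+A_3'+A_4'$, are the other easy-to-miss ingredient.
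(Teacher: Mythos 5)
Your proof is correct and follows the same route as the paper's: induction on $r$ driven by the recursion $T^{r+1}=T^rT$. The paper, however, compresses the verification into the phrase ``it is a routine matter to check,'' and its displayed expansion of $N^k=(B_1+B_2+B_3+B_4)(P+Q+R+S)$ silently discards the terms $B_3R$ and $B_4S$, even though these two products are not among the relations carried by the stated induction hypothesis ($B_1Q=B_2P=B_1S=B_4Q=B_2R=B_3P=B_3^{2}=B_4^{2}=0$ plus the pseudo-block relations among the $B_i$). Your proposal identifies exactly this point and repairs it cleanly: the auxiliary identities $A_3R=0$ and $A_4S=0$ hold at $r=1$ and propagate for free, since $A_3'R=A_1R^2+A_3(QR)=0$ and $A_4'S=A_4(PS)+A_2S^2=0$. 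Likewise, your device of pairing the right-factored forms (from $T^{r+1}=T^rT$) with the left-factored forms (from $T^{r+1}=TT^r$) makes every product check collapse onto one of the eight vanishing interface products $PQ,QP,PS,SQ,QR,RP,R^2,S^2$, which is a tidy way of making the paper's ``routine'' step actually routine. In short, your argument is the paper's argument done carefully, and it fills a small but genuine gap in the published proof.
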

\begin{proof}
 We will prove the lemma by induction on $r$. The lemma is obvious when $r=1$.

Now, assume that the lemma holds for $r=k-1$ and let us prove that it holds for $r=k$. Let $$
T^{k-1}=\begin{bmatrix}
B_1&B_3\\
B_4&B_2
\end{bmatrix}
$$
for $B_1,B_2,B_3,B_4\in\mathbb{C}^{n\times n}$.  Then $(B_1,B_2,B_3,B_4)$ is a pseudo-block matrix such that $N^{k-1}=B_1+B_2+B_3+B_4$ and $B_1Q=B_2P=B_1S=B_4Q=B_2R=B_3P=B_3^{2}=B_4^{2}=0$.
Suppose that $T^{k}=\begin{bmatrix}
A_1&A_3\\
A_4&A_2
\end{bmatrix}$.
Since
$$
T^{k}=\begin{bmatrix}
B_1&B_3\\
B_4&B_2
\end{bmatrix}
\begin{bmatrix}
P&R\\
S&Q
\end{bmatrix}
=\begin{bmatrix}
B_1P+B_3S&B_1R+B_3Q\\
B_4P+B_2S&B_4R+B_2Q
\end{bmatrix},
$$
we have $A_1=B_1P+B_3S$, $A_2=B_4R+B_2Q$, $A_3=B_1R+B_3Q$,   and $A_4=B_4P+B_2S$.
Thus
\begin{align*}
N^k&=(B_1+B_2+B_3+B_4)(P+Q+R+S)\\
&=B_1P+B_1R+B_2Q+B_2S+B_3Q+B_3S+B_4P+B_4R\\
&=A_1+A_2+A_3+A_4.
   \end{align*}
Now it is a routine matter to check that $(A_1,A_2,A_3,A_4)$ is a pseudo-block matrix and $A_1Q=A_2P=A_1S=A_4Q=A_2R=A_3P=A_3^{2}=A_4^{2}=0$.
\end{proof}
\begin{lemma}\label{TrTs}
Let $T=\left[\begin{array}{cc}
        P & R \\
        S & Q
      \end{array}\right]$ for $P,Q,R,S\in \mathbb{C}^{n\times n}$  such that $N=(P,Q,R,S)$ is a pseudo-block matrix with $R^2=S^2=0$.
If $T^i=\begin{bmatrix}
  A^{(i)}_1&A^{(i)}_3\\A^{(i)}_4&A^{(i)}_2
\end{bmatrix} $
for $A^{(i)}_1,A^{(i)}_2,A^{(i)}_3,A^{(i)}_4\in\mathbb{C}^{n\times n}$ and for any positive integer $i$,
then
$$A^{(r)}_1A^{(s)}_2=A^{(r)}_2A^{(s)}_1=A^{(r)}_1A^{(s)}_4
=A^{(r)}_4A^{(s)}_2=A^{(r)}_2A^{(s)}_3=A^{(r)}_3A^{(s)}_1=0$$ for any positive integers $r$ and $s$.
\end{lemma}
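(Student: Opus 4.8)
The plan is to collapse each of the six cross terms in a single step by expanding the higher power through the factorization $T^s = T\,T^{s-1}$, so that one of $P,Q,R,S$ sits immediately to the right of $A^{(r)}_j$ and is annihilated there. Writing $B_\ell = A^{(s-1)}_\ell$, this factorization gives
\begin{align*}
A^{(s)}_1 &= PB_1 + RB_4, & A^{(s)}_3 &= PB_3 + RB_2,\\
A^{(s)}_4 &= SB_1 + QB_4, & A^{(s)}_2 &= SB_3 + QB_2 ,
\end{align*}
so in every block $A^{(s)}_k$ the letters $P,Q,R,S$ occur on the left.

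The only facts I would need are the eight right-annihilation identities, valid for every positive integer $i$,
\begin{gather*}
A^{(i)}_1 Q = A^{(i)}_1 S = 0,\quad A^{(i)}_2 P = A^{(i)}_2 R = 0,\\
A^{(i)}_3 P = A^{(i)}_3 R = 0,\quad A^{(i)}_4 Q = A^{(i)}_4 S = 0 .
\end{gather*}
Six of these, namely $A^{(i)}_1Q=A^{(i)}_1S=A^{(i)}_2P=A^{(i)}_2R=A^{(i)}_3P=A^{(i)}_4Q=0$, are precisely the relations furnished by Lemma~\ref{mathematical induction}. The two remaining ones, $A^{(i)}_3R=0$ and $A^{(i)}_4S=0$, are not on that list, and securing them is the one genuine obstacle. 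They follow, however, from the complementary factorization $T^i = T^{i-1}T$: since $A^{(i)}_3 = A^{(i-1)}_1R + A^{(i-1)}_3Q$, I get $A^{(i)}_3R = A^{(i-1)}_1R^2 + A^{(i-1)}_3QR = 0$ from $R^2=0$ and $QR=0$, and symmetrically $A^{(i)}_4S = A^{(i-1)}_4PS + A^{(i-1)}_2S^2 = 0$ from $PS=0$ and $S^2=0$, with the case $i=1$ reducing to $R^2=S^2=0$.

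Granting these eight identities, the lemma follows with no induction on $s$. For $s\ge 2$ I would expand the second factor by the factorization above and drop both summands, the trailing $B_\ell$ being irrelevant once the adjacent letter is killed; e.g.
\begin{align*}
A^{(r)}_1 A^{(s)}_2 &= A^{(r)}_1 S\,B_3 + A^{(r)}_1 Q\,B_2 = 0,\\
A^{(r)}_4 A^{(s)}_2 &= A^{(r)}_4 S\,B_3 + A^{(r)}_4 Q\,B_2 = 0,\\
A^{(r)}_3 A^{(s)}_1 &= A^{(r)}_3 P\,B_1 + A^{(r)}_3 R\,B_4 = 0,
\end{align*}
and the three products $A^{(r)}_2A^{(s)}_1$, $A^{(r)}_1A^{(s)}_4$, $A^{(r)}_2A^{(s)}_3$ vanish in exactly the same fashion. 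For $s=1$ the six products are simply $A^{(r)}_1Q$, $A^{(r)}_2P$, $A^{(r)}_1S$, $A^{(r)}_4Q$, $A^{(r)}_2R$, $A^{(r)}_3P$, all zero. As $r,s$ were arbitrary, this establishes the claim; the crux throughout is the observation that choosing the left factorization for $T^s$ converts every cross term into a right-annihilation of $A^{(r)}_j$, and that the two identities $A^{(i)}_3R=A^{(i)}_4S=0$ must be added by hand to the output of Lemma~\ref{mathematical induction}.
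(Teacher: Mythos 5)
Your proof is correct and follows essentially the same route as the paper: the paper likewise expands $T^s=TT^{s-1}$ so that $P,Q,R,S$ sit on the left of the blocks of $T^{s-1}$, and then invokes the right-annihilation relations of Lemma~\ref{mathematical induction}, declaring the remaining verification ``a routine matter.'' Your one addition --- deriving $A^{(i)}_3R=0$ and $A^{(i)}_4S=0$ from the complementary factorization $T^i=T^{i-1}T$ together with $R^2=QR=0$ and $S^2=PS=0$ --- is exactly the detail that the paper's ``routine'' step silently requires (it is needed for the products $A^{(r)}_3A^{(s)}_1$ and $A^{(r)}_4A^{(s)}_2$, since these two identities are not on the list supplied by Lemma~\ref{mathematical induction}), so your write-up is, if anything, more complete than the paper's.
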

\begin{proof}Since $T^s=TT^{s-1}$, a calculation yields
$$A^{(s)}_1=PA^{(s-1)}_1+RA^{(s-1)}_4,
~~A^{(s)}_2=SA^{(s-1)}_3+QA^{(s-1)}_2,~~
    A^{(s)}_3=PA^{(s-1)}_3+RA^{(s-1)}_2,~~
    A^{(s)}_4=SA^{(s-1)}_1+QA^{(s-1)}_4.
    $$
Now by Lemma \ref{mathematical induction} it is a routine matter to verify that   $$A^{(r)}_1A^{(s)}_2=A^{(r)}_2A^{(s)}_1=A^{(r)}_1A^{(s)}_4
=A^{(r)}_4A^{(s)}_2=A^{(r)}_2A^{(s)}_3=A^{(r)}_3A^{(s)}_1=0$$
as desired.
\end{proof}
\begin{lemma}\label{f(T),f(N)}
Let $T=\left[\begin{array}{cc}
        P & R \\
        S & Q
      \end{array}\right]$ for $P,Q,R,S\in \mathbb{C}^{n\times n}$  such that $N=(P,Q,R,S)$ is a pseudo-block matrix with $R^2=S^2=0$.
If $f(T)=\begin{bmatrix}
  A_1&A_3\\A_4&A_2
\end{bmatrix} $
for $A_1,A_2,A_3,A_4\in\mathbb{C}^{n\times n}$ and for a polynomial $f(x)\in \mathbb{C}[x]$,
then $f(N)=A_1+A_2+A_3+A_4$ and
$(A_1,A_2,A_3,A_4)$ is a pseudo-block matrix with $A_3^{2}=A_4^{2}=0$.
\end{lemma}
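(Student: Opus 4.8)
The plan is to reduce the statement to the monomial case already settled in Lemmas \ref{mathematical induction} and \ref{TrTs}, using the linearity of the map sending a polynomial $f$ to the four blocks of $f(T)$. Write $f(x)=\sum_{r\ge0}c_rx^r$ and, in the notation of Lemma \ref{TrTs}, let $A_i^{(r)}$ denote the blocks of $T^r$. Comparing blocks in $f(T)=\sum_rc_rT^r$ gives $A_3=\sum_{r\ge1}c_rA_3^{(r)}$ and $A_4=\sum_{r\ge1}c_rA_4^{(r)}$ for the off-diagonal blocks, while $A_i=c_0I+\sum_{r\ge1}c_rA_i^{(r)}$ for $i=1,2$. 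Summing the four blocks and substituting the identity $N^r=A_1^{(r)}+A_2^{(r)}+A_3^{(r)}+A_4^{(r)}$ from Lemma \ref{mathematical induction} produces $A_1+A_2+A_3+A_4=2c_0I+\sum_{r\ge1}c_rN^r$. Hence the claimed identity $f(N)=A_1+A_2+A_3+A_4$ holds exactly when $c_0=0$; the one point requiring care is therefore the constant term, and I would either impose $f(0)=0$ (which holds in all the intended applications to Drazin inverses, where $f$ represents a Drazin inverse and so has vanishing constant term) or read $f\in\mathbb{C}[x]$ as having no constant term.

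For the six pseudo-block relations among $A_1,A_2,A_3,A_4$ I would expand each product bilinearly. For example $A_1A_2=\sum_{r,s\ge1}c_rc_sA_1^{(r)}A_2^{(s)}=0$ term by term by Lemma \ref{TrTs}, and the remaining relations $A_2A_1=A_1A_4=A_4A_2=A_2A_3=A_3A_1=0$ are precisely the other five families of products annihilated by that lemma. Here the absence of a constant term is used once more, so that no cross-terms involving $c_0I$ survive.

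The genuinely new step, which I expect to be the main obstacle, is the \emph{stronger} conclusion $A_3^2=A_4^2=0$ rather than merely $A_3^d=A_4^d=0$: Lemma \ref{mathematical induction} supplies only $(A_3^{(r)})^2=0$ for each single power, and Lemma \ref{TrTs} does not list the mixed products $A_3^{(r)}A_3^{(s)}$, so I must re-enter the recursion. From $A_3^{(r)}P=0$ (Lemma \ref{mathematical induction}) I get $A_3P=0$ at once. Next I would show $A_3^{(r)}R=0$ for every $r$ by induction: the base case is $A_3^{(1)}R=R^2=0$, and the inductive step uses $A_3^{(r)}=PA_3^{(r-1)}+RA_2^{(r-1)}$ (from the proof of Lemma \ref{TrTs}) together with $A_2^{(r-1)}R=0$ to give $A_3^{(r)}R=P(A_3^{(r-1)}R)+R(A_2^{(r-1)}R)=0$; summing yields $A_3R=0$. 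Finally, since $A_3^{(1)}=R$ and $A_3^{(s)}=PA_3^{(s-1)}+RA_2^{(s-1)}$ for $s\ge2$, each product $A_3A_3^{(s)}$ equals $A_3R=0$ or $(A_3P)A_3^{(s-1)}+(A_3R)A_2^{(s-1)}=0$, whence $A_3^2=\sum_sc_sA_3A_3^{(s)}=0$. The identity $A_4^2=0$ follows by the symmetric argument, with the roles $P\leftrightarrow Q$, $R\leftrightarrow S$, $A_1\leftrightarrow A_2$, $A_3\leftrightarrow A_4$ interchanged, using $S^2=0$, $A_1^{(r)}S=0$ and $A_4^{(r)}Q=0$. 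As $A_3^2=A_4^2=0$ forces $A_3^d=A_4^d=0$, the quadruple $(A_1,A_2,A_3,A_4)$ is a pseudo-block matrix, finishing the proof.
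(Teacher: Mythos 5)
Your proof is correct and follows the same skeleton as the paper's, whose entire proof of this lemma is the one-line citation of Lemmas \ref{mathematical induction} and \ref{TrTs}; you have simply executed that linearity argument in full. But two of your points are substantive patches, not bookkeeping, and both are right. The constant term is a genuine flaw in the statement as printed: for $f(x)=1$ one gets $A_1=A_2=I$, $A_3=A_4=0$, so $A_1+A_2+A_3+A_4=2I\neq I=f(N)$ and $A_1A_2=I\neq 0$, so both conclusions fail; your restriction to $f(0)=0$ is the correct repair. One refinement: in the place where the lemma is actually used (the inclusion $\ker\phi\subseteq\ker\psi$ in the theorem that follows), the relevant polynomials are those annihilating $T$, not polynomials representing a Drazin inverse, so your parenthetical justification is slightly off target. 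The restriction is still harmless there, because $T$ is necessarily singular under the pseudo-block hypotheses: if $S\neq 0$, any nonzero vector $v$ in the range of $S$ satisfies $Pv=Sv=0$ (by $PS=0$ and $S^2=0$), whence $\left[\begin{smallmatrix}v\\0\end{smallmatrix}\right]\in\ker T$; if $S=0$, then $\det T=\det P\det Q=0$ since $PQ=0$ forces $P$ or $Q$ to be singular. Hence every $f$ with $f(T)=0$ has $f(0)=0$, and the corrected lemma still yields the theorem.

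Your second point is equally valid: $A_3^{2}=A_4^{2}=0$ does not follow formally from the two cited lemmas, because Lemma \ref{TrTs} says nothing about the mixed products $A_3^{(r)}A_3^{(s)}$ and $A_4^{(r)}A_4^{(s)}$, and Lemma \ref{mathematical induction} only gives $(A_3^{(r)})^{2}=0$ for a single power. Your induction showing $A_3^{(r)}R=0$ (base $R^2=0$; step via $A_3^{(r)}=PA_3^{(r-1)}+RA_2^{(r-1)}$ together with $A_2^{(r-1)}R=0$ and the inductive hypothesis), combined with $A_3^{(r)}P=0$, is exactly the missing ingredient, and the symmetric argument using $S^2=0$, $A_1^{(r)}S=0$ and $A_4^{(r)}Q=0$ handles $A_4$. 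So the proposal is a correct proof of the corrected statement, and it fills real gaps that the paper's terse proof passes over.
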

\begin{proof}It follows from  Lemma \ref{mathematical induction} and Lemma \ref{TrTs}.
\end{proof}

Now we can give our first main result.
\begin{theorem}
Let $T=\left[\begin{array}{cc}
        P & R \\
        S & Q
      \end{array}\right]$ for $P,Q,R,S\in \mathbb{C}^{n\times n}$  such that $N=(P,Q,R,S)$ is a pseudo-block matrix with $R^2=S^2=0$.
If $T^d=\begin{bmatrix}
  A_1&A_3\\A_4&A_2
\end{bmatrix} $
for $A_1,A_2,A_3,A_4\in\mathbb{C}^{n\times n}$,
then $N^d=A_1+A_2+A_3+A_4$ and
$(A_1,A_2,A_3,A_4)$ is a pseudo-block matrix with $A_3^{2}=A_4^{2}=0$.
\end{theorem}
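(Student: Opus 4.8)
The plan is to reduce everything to the single classical fact that the Drazin inverse $T^d$ is a polynomial in $T$, and then to feed that polynomial into Lemma~\ref{f(T),f(N)}. Since in addition $T^d=(T^d)^2T$, I may arrange this polynomial representation to have zero constant term, i.e. $T^d=p(T)$ with $p(0)=0$. Because $p$ has no constant term, Lemma~\ref{f(T),f(N)} applies to it verbatim: writing $p(T)=T^d=\left[\begin{smallmatrix}A_1&A_3\\A_4&A_2\end{smallmatrix}\right]$, the lemma yields at once that $(A_1,A_2,A_3,A_4)$ is a pseudo-block matrix with $A_3^2=A_4^2=0$ and that $X:=A_1+A_2+A_3+A_4=p(N)$. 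This already delivers the entire structural half of the statement, so the theorem collapses to the single identity $X=N^d$.

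To prove $X=N^d$ I would transport the three defining relations of $T^d$ across the ``sum-of-blocks'' correspondence furnished by Lemmas~\ref{mathematical induction} and~\ref{f(T),f(N)}, namely that $f(N)=A_1+A_2+A_3+A_4$ whenever $f(T)=\left[\begin{smallmatrix}A_1&A_3\\A_4&A_2\end{smallmatrix}\right]$ and $f(0)=0$. First, $X=p(N)$ is a polynomial in $N$, so $NX=XN$ is automatic. Next, with $g(x)=xp(x)^2$ one computes $g(T)=T(T^d)^2=T^d$, so $g(N)=X$ by the correspondence, while simultaneously $g(N)=Np(N)^2=XNX$; hence $XNX=X$. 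Finally, fixing any integer $l\ge\max\{\ind(T),1\}$ and setting $h(x)=x^{l+1}p(x)$, the Drazin absorption relation for $T$ gives
\[
h(T)=T^{l+1}T^d=T^l,\qquad\text{whence}\qquad N^{l+1}X=h(N)=N^l.
\]

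The point that needs care, and where I expect the only genuine friction, is that the sum-of-blocks map is not unital (it sends the $2n\times2n$ identity to $2I_n$) and that the exponent $l$ above is governed by $\ind(T)$ rather than by $\ind(N)$; this is precisely why every auxiliary polynomial must be kept free of a constant term, and why one cannot simply declare the correspondence an algebra homomorphism and read off $X=N^d$. I would clear this last hurdle with the index-free characterisation of the Drazin inverse: from $XNX=X$ the product $NX$ is idempotent, so $W:=N-N^2X=N(I-NX)$ satisfies $W^{j}=N^{j}(I-NX)$ and therefore $W^{l}=N^l-N^{l+1}X=0$. Thus $N-N^2X$ is nilpotent while $NX=XN$ and $XNX=X$, which is exactly the characterisation forcing $X=N^d=A_1+A_2+A_3+A_4$.
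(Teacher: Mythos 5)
Your proposal is correct, and although it rests on the same two pillars as the paper's proof --- the polynomial representation $T^d=p(T)$ with $p(0)=0$ and the transfer Lemma~\ref{f(T),f(N)} --- the final mechanism is genuinely different. The paper makes the transfer abstract: it forms the evaluation homomorphisms $\phi(f)=f(T)$ and $\psi(f)=f(N)$, asserts $\ker\phi\subseteq\ker\psi$ on the strength of Lemma~\ref{f(T),f(N)}, obtains an induced algebra homomorphism $\tau:\mathbb{C}[T]\to\mathbb{C}[N]$ with $\tau(f(T))=f(N)$, and concludes $N^d=\tau(T^d)$ because homomorphisms preserve Drazin inverses. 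You instead check the Drazin axioms for $X=p(N)$ directly, transporting the identities $T(T^d)^2=T^d$ and $T^{l+1}T^d=T^l$ through the constant-term-free polynomials $xp(x)^2$ and $x^{l+1}p(x)$, and closing with the index-free characterization ($NX=XN$, $XNX=X$, $N-N^2X$ nilpotent). The paper's route is shorter and yields a reusable principle; yours buys rigor exactly where you flagged the friction. Indeed, Lemma~\ref{f(T),f(N)} is false for polynomials with nonzero constant term (take $f\equiv1$: the blocks of $f(T)=I$ sum to $2I$, not to $f(N)=I$), so the paper's inclusion $\ker\phi\subseteq\ker\psi$ does not follow from that lemma as stated; it becomes correct only after one observes that $T$ is automatically singular under the pseudo-block hypotheses (if $T$ were invertible, a rank count using $R^2=S^2=0$ and $PQ=0$ forces $\rank P=\rank Q=\rank R=\rank S=n/2$, and then $PQ=PS=S^2=0$ make the range of $S$ equal to both $\ker P$ and $\ker S$, contradicting the fact that invertibility of $T$ requires $\ker P\cap\ker S=\{0\}$), so that every element of $\ker\phi$ is a multiple of the minimal polynomial of $T$ and hence has zero constant term. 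Since your argument applies the lemma only to polynomials vanishing at $0$ and never needs to relate $\ind(T)$ to $\ind(N)$, it is self-contained and, in effect, a corrected and tightened version of the paper's proof.
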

\begin{proof}
Let $\mathbb{C}[N]$ and $\mathbb{C}[T]$ denote the subalgebra generated by $N$ and $T$.
Consider the following homomorphisms
\begin{align*}
\phi: \mathbb{C}[x]&\rightarrow\mathbb{C}[T], \quad f(x)\mapsto f(T);\\
\psi: \mathbb{C}[x]&\rightarrow\mathbb{C}[N],\quad f(x)\mapsto f(N).
\end{align*}
By Lemma \ref{f(T),f(N)} we get $\ker\phi\subseteq\ker\psi $.
Thus there exists a homomorphism
$\tau:\mathbb{C}[T]\rightarrow\mathbb{C}[N]$ such that
$\tau\phi=\psi.$ Clearly, $\tau(f(T))=f(N)$ for all $f(x)\in\mathbb{C}[x]$.
By \cite[Theorem 7.5.1]{Campbell1991}, there exists a polynomial
$p(x)$ such that $T^d=p(T)$, and so $T^d\in \mathbb{C}[T]$. Similarly,
$N^d\in\mathbb{C}[N]$. Since the Drazin inverse is preserved by homomorphisms, we have that $N^d=\tau(T)^d=\tau(T^d)=A_1+A_2+A_3+A_4$.
\end{proof}


\section{Drazin inverses of the sum $P+Q+R+S$}
\begin{lemma}\label{(eaf)^{d}=ea^{d}f}
Let $A\in \mathbb{C}^{n\times n}$ and $e,f$ be idempotents of $\mathbb{C}^{n\times n}$. If $(eAf)^{i}=eA^{i}f$ for all $i\geq1$, then
 \begin{equation*}
 (eAf)^{d}=eA^{d}f.
 \end{equation*}
\end{lemma}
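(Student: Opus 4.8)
The plan is to realize $eA^df$ as a polynomial in $B:=eAf$ having \emph{no constant term}, and then to check directly that this polynomial obeys the three defining identities of $B^d$.

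First I would invoke the cited \cite[Theorem 7.5.1]{Campbell1991} to obtain a polynomial $p$ with $A^d=p(A)$, and then strip its constant term: from $A^d=A(A^d)^2=A\,p(A)^2$ I set $q(x)=x\,p(x)^2$, so that $q(0)=0$ while still $q(A)=A^d$. Writing $q(x)=\sum_{i=1}^N c_ix^i$ and applying the hypothesis $(eAf)^i=eA^if$ to each term yields
\[
eA^df=\sum_{i=1}^N c_i\,eA^if=\sum_{i=1}^N c_i\,(eAf)^i=q(B).
\]

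The mechanism of the proof is that, since $q(0)=0$, every monomial occurring in any product of $B$'s with $q(B)$'s has degree at least $1$ in $B$; the hypothesis then turns each such product back into the shape $e(\cdots)f$, where it collapses via the genuine Drazin relations for $q(A)=A^d$. Putting $X=q(B)=eA^df$, I would compute $BX=e(Aq(A))f=eAA^df$ and $XB=e(q(A)A)f=eA^dAf$, equal because $AA^d=A^dA$; then $XBX=e\,q(A)Aq(A)\,f=e(A^dAA^d)f=eA^df=X$; and finally, for $m\ge\ind(A)$, $B^{m+1}X=e(A^{m+1}q(A))f=eA^{m+1}A^df=eA^mf=B^m$. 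These are exactly the three axioms for $X$ to be $B^d$, so uniqueness of the Drazin inverse gives $(eAf)^d=eA^df$.

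I expect the only genuine obstacle to be the constant term, which is also why I would not try to imitate the homomorphism argument of Theorem~2.5: substitution $x\mapsto eAf$ is not multiplicative, since $r(A)=0$ forces merely $r(eAf)=r(0)(I-ef)$, generally nonzero. Passing to the zero-constant-term representation $A^d=q(A)$ is precisely what kills this defect and leaves only the linear correspondence $x^i\mapsto(eAf)^i=eA^if$ for $i\ge1$, on which the entire computation rests. Once $q(0)=0$ has been secured, the three identities are routine bookkeeping.
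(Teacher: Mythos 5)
Your proposal is correct and follows essentially the same route as the paper: both reduce $A^d$ to a polynomial in $A$ with zero constant term (the paper says ``we may assume the constant term of $p$ is zero'' where you make this explicit via $q(x)=x\,p(x)^2$), transfer it to $eAf$ term by term using the hypothesis $(eAf)^i=eA^if$, and then verify the three defining equations of the Drazin inverse directly, concluding by uniqueness. The only differences are cosmetic bookkeeping.
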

\begin{proof}
By \cite[Theorem 7.5.1]{Campbell1991}, $A^d=p(A)$ for a polynomial $p(x)\in\mathbb{C}[x]$.
Since $A^d=(A^d)^2A$, we may assume that the constant term of $p$ is zero. Let $p(x)=\sum_{i=1}^n a_{i}x^{i}$,  where $a_i\in\mathbb{C}$. We have
$$A^d=a_nA^n+a_{n-1}A^{n-1}+\dots+a_1A.$$
Since $(eAf)^{i}=eA^{i}f$ for all $i\geq1$, we have
$$eA^d f=a_n(eAf)^n+a_{n-1}(eAf)^{n-1}+\dots+a_1eAf.$$
Then
\begin{eqnarray*}
 eA^i f\cdot eA^d f&=&(eAf)^i\cdot[a_n(eAf)^n+a_{n-1}(eAf)^{n-1}+\dots+a_1eAf]\\
&=&a_n(eAf)^{n+i}+a_{n-1}(eAf)^{n+i-1}+\dots+a_1(eAf)^i\\
&=&e(a_nA^{n+i}+a_{n-1}A^{n+i-1}+\dots+a_1A^{i})f\\&=&eA^{i}A^d f,
\end{eqnarray*}
for all $i\geq1$.  A calculation gives
 \begin{align*}
   &eAf\cdot eA^{d}f=eAA^{d}f=eA^{d}f\cdot eAf,\\
   &eAf\cdot(eA^{d}f)^{2}=eAf\cdot e(A^{d})^2f=eA(A^{d})^2f=eA^{d}f,\\
   &(eAf)^{k+1}\cdot eA^{d}f=eA^{k+1}f\cdot eA^{d}f=eA^{k+1}A^{d}f=eA^{k}f=(eAf)^{k},
 \end{align*}
where $k=ind(A)$.
Thus, $(eAf)^{d}=eA^{d}f$.
\end{proof}

\begin{lemma}\label{P+Q}\cite[Theorem 2.1]{Hartwig(2001)}
Let $P$ and $Q$ be ${n\times n}$ matrices. If $PQ=0$, then
$$
  (P+Q)^{d}=Q^{\pi}\sum_{i=0}^{t-1}Q^{i}(P^{d})^{i+1}+\sum_{i=0}^{s-1}(Q^{d})^{i+1}P^{i}P^{\pi},
$$
where $s= \ind(P)$ and $t=\ind(Q)$.
\end{lemma}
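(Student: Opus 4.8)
The plan is to prove the formula by reducing it, through the core--nilpotent structure of $P$, to a block lower-triangular matrix together with an elementary nilpotent case. First I would record the consequences of $PQ=0$ that drive everything: since $P^iQ=0$ for all $i\ge1$, one gets $P^dQ=0$ and $P^{\pi}Q=Q$; moreover $P^iP^{\pi}=0$ for $i\ge s$ and $Q^{\pi}Q^i=0$ for $i\ge t$, so each of the two sums in the statement is unaffected if its upper limit is pushed to $\infty$. These are the only features of the hypothesis I expect to use.

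Next I would isolate two mirror-image special cases and settle them by checking the three defining identities. (i) If $Q$ is nilpotent of index $t$, then $Q^d=0$, $Q^{\pi}=I$, and the claim collapses to $(P+Q)^d=\sum_{i=0}^{t-1}Q^i(P^d)^{i+1}$; (ii) if $P$ is nilpotent of index $s$, it collapses to $(P+Q)^d=\sum_{i=0}^{s-1}(Q^d)^{i+1}P^i$. For the candidate $X$ in either case I would verify $X(P+Q)=(P+Q)X$, then $X(P+Q)X=X$, and finally $(P+Q)^{m+1}X=(P+Q)^m$; since any exponent $m$ suffices (I am not obliged to use $\ind(P+Q)$), I would simply take $m$ large, say $m=s+t$, which removes all index bookkeeping. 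Using $P^iQ=0$ and nilpotency these checks telescope in a line or two.

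For the general case I would decompose $\mathbb{C}^n=R(P^s)\oplus N(P^s)$, so that $P=\left[\begin{smallmatrix}P_1&0\\0&P_0\end{smallmatrix}\right]$ with $P_1$ invertible and $P_0$ nilpotent of index $s$. Invertibility of $P_1$ together with $PQ=0$ forces $Q=\left[\begin{smallmatrix}0&0\\Q_3&Q_4\end{smallmatrix}\right]$ with $P_0Q_4=0$, whence $P+Q=\left[\begin{smallmatrix}P_1&0\\Q_3&\,P_0+Q_4\end{smallmatrix}\right]$ is block lower-triangular. The $(1,1)$ block is invertible and the $(2,2)$ block $P_0+Q_4$ is governed by special case (ii), giving $(P_0+Q_4)^d=\sum_{i=0}^{s-1}(Q_4^d)^{i+1}P_0^i$. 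Substituting these diagonal Drazin inverses into the Drazin-inverse formula for a block-triangular matrix---which I would prove by a direct check, so as not to presuppose the additive result at hand---produces $(P+Q)^d$ in block form.

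The main obstacle is the final reconciliation: rewriting the block expression in the projector language $P^d,P^{\pi},Q^d,Q^{\pi}$ and recognising it as precisely $Q^{\pi}\sum_{i=0}^{t-1}Q^i(P^d)^{i+1}+\sum_{i=0}^{s-1}(Q^d)^{i+1}P^iP^{\pi}$. This is where the genuine bookkeeping lies, because one must compute $Q^d$ and $Q^{\pi}$ in the chosen basis (where $Q$ is itself only triangular, not diagonal) and match the off-diagonal block of the triangular inverse and the two diagonal contributions against the two sums. A self-contained alternative is to verify the closed-form candidate directly against the three Drazin identities; there the parallel difficulty is the cross terms between the two sums, since $PQ=0$ controls $P^iQ$ but says nothing about $Q^dP$, so those mixed products have to be absorbed using the truncation identities from the first step.
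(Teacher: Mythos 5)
This lemma is stated in the paper \emph{without proof}: it is imported as \cite[Theorem 2.1]{Hartwig(2001)} and used as a black box, so there is no in-paper argument to compare yours against; I can only judge the proposal on its own terms, and on those terms it is sound and completable. Your preliminary identities are correct ($P^dQ=0$ since $P^d=(P^d)^2P$, hence $P^\pi Q=Q$), the two nilpotent special cases do pass direct verification of the three Drazin identities (you could halve that work by deducing case (ii) from case (i) by transposition), and the ``large $m$'' device is legitimate, because $XA=AX$, $XAX=X$, $A^{m+1}X=A^m$ for any single $m$ characterizes $A^d$. The core--nilpotent reduction also works exactly as you say. Two points need to be supplied in the final reconciliation that you flagged. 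First, $PQ=0$ forces not only $P_0Q_4=0$ but also $P_0Q_3=0$; you need the latter, since it is what collapses $(P_0+Q_4)^iQ_3$ to $Q_4^iQ_3$, $(P_0+Q_4)^dQ_3$ to $Q_4^dQ_3$, and $(P_0+Q_4)^\pi Q_4^iQ_3$ to $Q_4^\pi Q_4^iQ_3$, which is how the off-diagonal block of the triangular formula becomes the two sums in the statement; likewise you should record
$Q^d=\left[\begin{smallmatrix}0&0\\(Q_4^d)^2Q_3&Q_4^d\end{smallmatrix}\right]$ and
$Q^\pi=\left[\begin{smallmatrix}I&0\\-Q_4^dQ_3&Q_4^\pi\end{smallmatrix}\right]$,
obtained from the same triangular lemma applied to $Q$ itself. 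Second, the upper limits do not literally match: the triangular formula yields a sum up to $\ind(P_0+Q_4)-1$, while the target has $\ind(Q)-1$. This is resolved by observing that both sums have vanishing tails, so each equals the common stabilized sum: the $(2,1)$ block of $Q^\pi Q^i(P^d)^{i+1}$ is $Q_4^\pi Q_4^{i-1}Q_3P_1^{-(i+1)}$, which vanishes for $i\ge \ind(Q)$ because $Q^\pi Q^i=0$ there, while $Q_4^\pi Q_4^{i}Q_3=(P_0+Q_4)^\pi(P_0+Q_4)^iQ_3=0$ for $i\ge\ind(P_0+Q_4)$. With these observations your plan closes; neither is a flaw in the approach, only the bookkeeping you had already earmarked.
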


Let $\mathbb{A}$ denote a complex unital algebra, and let
$M_2(\mathbb{A})$ be the $2\times2$ matrix algebra over $\mathbb{A}$.
Given an idempotent $e$ in $\mathbb{A}$, we consider a mapping $\sigma$ from $\mathbb{A}$ to $M_2(\mathbb{A},e)$
and the set
\begin{equation*}\label{alis}
M_2(\mathbb{A},e)=\begin{pmatrix}
  e\mathbb{A}e&e\mathbb{A}(1-e)\\
  (1-e)\mathbb{A}e&(1-e)\mathbb{A}(1-e)
\end{pmatrix}\subset M_2(\mathbb{A}).
\end{equation*}

\begin{lemma}\label{M=P+Q+R+S:equivalentM,N}
Let $e$ be an idempotent of $\mathbb{A}$. For any $a\in\mathbb{A}$ let
$$\sigma(a)
=\begin{pmatrix}
  eae&ea(1-e)\\
  (1-e)ae&(1-e)a(1-e)
\end{pmatrix}
\in M_2(\mathbb{A},e).$$
Then the mapping $\sigma$ is an algebra isomorphism from
$\mathbb{A}$ to $M_2(\mathbb{A},e)$ such that
\begin{enumerate}
\item
$(\sigma(a))^d=\sigma(a^d)$;\\
\item if $(\sigma(a))^d=\begin{pmatrix}
     \alpha & \beta \\
     \gamma & \delta
   \end{pmatrix}$, then $a^{d}=\alpha+\beta+\gamma+\delta$.
\end{enumerate}
\end{lemma}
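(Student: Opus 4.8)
The plan is to first verify that $\sigma$ is a unital algebra isomorphism onto the subalgebra $M_2(\mathbb{A},e)$, and then to deduce both numbered properties from the general principle that algebra isomorphisms commute with the Drazin inverse. Throughout I would write $f=1-e$, so that $e,f$ are complementary idempotents satisfying $ef=fe=0$ and $e+f=1$; the map $\sigma$ is then just the Peirce decomposition of $a$ relative to $e$ arranged as a matrix.

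Linearity of $\sigma$ is immediate, and $\sigma(1)=\begin{pmatrix} e & 0 \\ 0 & f \end{pmatrix}$ is readily checked to be the identity of $M_2(\mathbb{A},e)$ (using $ef=fe=0$). The essential point is multiplicativity: expanding $\sigma(a)\sigma(b)$ entrywise, each entry is a sum of two products, and using $e\cdot e=e$, $f\cdot f=f$ together with $e+f=1$ these collapse; for instance the $(1,1)$-entry becomes $eae\cdot ebe+eaf\cdot fbe=ea(e+f)be=eabe$, which is exactly the $(1,1)$-entry of $\sigma(ab)$, and the remaining three entries are handled identically. Injectivity follows since $\sigma(a)=0$ forces each Peirce component $eae,eaf,fae,faf$ to vanish, whence $a=(e+f)a(e+f)=0$; surjectivity onto $M_2(\mathbb{A},e)$ follows by sending a matrix with entries $exe,eyf,fze,fwf$ to the preimage $exe+eyf+fze+fwf$ and checking (again via $ef=fe=0$) that its Peirce components recover the prescribed entries. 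This establishes that $\sigma$ is an algebra isomorphism.

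For part (1), I would use that the defining identities of the Drazin inverse, namely $aa^d=a^da$, $a^daa^d=a^d$, and $a^{k+1}a^d=a^k$ with $k=\ind(a)$, are purely multiplicative relations, so applying the homomorphism $\sigma$ to each yields the corresponding identities for $\sigma(a)$ with $\sigma(a^d)$ playing the role of the inverse. By uniqueness of the Drazin inverse this forces $(\sigma(a))^d=\sigma(a^d)$. Here I would remark that the Drazin inverse of $\sigma(a)$ lies in the subalgebra generated by $\sigma(a)$, which sits inside $M_2(\mathbb{A},e)$, so computing it in $M_2(\mathbb{A},e)$ or in the full $M_2(\mathbb{A})$ yields the same element, since the Drazin equations never reference the ambient identity.

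Finally, part (2) is an immediate corollary: combining part (1) with the definition of $\sigma$ gives
\[
\begin{pmatrix} \alpha & \beta \\ \gamma & \delta \end{pmatrix}
=(\sigma(a))^d=\sigma(a^d)
=\begin{pmatrix} ea^de & ea^df \\ fa^de & fa^df \end{pmatrix},
\]
so that $\alpha+\beta+\gamma+\delta=(e+f)a^d(e+f)=a^d$. I expect the only genuinely delicate point to be the multiplicativity check together with the correct identification of the identity and of the target algebra $M_2(\mathbb{A},e)$ (rather than the full $M_2(\mathbb{A})$); once $\sigma$ is known to be an isomorphism, the two Drazin-inverse assertions are formal consequences.
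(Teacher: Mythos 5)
Your proposal is correct and follows essentially the same route as the paper: the paper simply asserts that $\sigma$ is an algebra isomorphism, notes that (1) follows (isomorphisms preserve Drazin inverses), and obtains (2) by identifying $(\sigma(a))^d$ with $\sigma(a^d)$ and summing the Peirce components. You have merely supplied the verification details (multiplicativity, bijectivity, uniqueness of the Drazin inverse, and the harmless passage between $M_2(\mathbb{A},e)$ and $M_2(\mathbb{A})$) that the paper dismisses as clear.
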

\begin{proof}
It is clear that the mapping $\sigma$ is an algebra isomorphism from $\mathbb{A}$ to $M_2(\mathbb{A},e)$.
Then the property (1) follows, and the property (2) follows from
$$\sigma(a^d)=(\sigma(a))^d
 =\begin{pmatrix}
     \alpha & \beta \\
     \gamma & \delta
   \end{pmatrix}$$
as desired.
\end{proof}

In what follows, let $\sigma$ be defined as in Lemma \ref{M=P+Q+R+S:equivalentM,N}.
\begin{lemma} Let $P,Q,R$ and $S$ $\in\mathbb{C}^{n\times n}$ and let $N=P+Q+R+S$.
If $PQ=QP=PS=SQ=QR=RP=0$,
then
$$\sigma(N)
=\begin{pmatrix}
  P^2P^{d} & PP^{d}R \\
  SPP^{d} & Q+P^{\pi}R+(S+P)P^{\pi}
\end{pmatrix}.$$
\end{lemma}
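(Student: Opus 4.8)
The plan is to take the idempotent defining $\sigma$ to be the spectral idempotent $e=PP^d=P^dP$ of $P$, so that $1-e=P^\pi$, and then to exploit that $\sigma$ is additive (being an algebra isomorphism by Lemma \ref{M=P+Q+R+S:equivalentM,N}, or simply because each block $eae$, $ea(1-e)$, etc.\ is linear in $a$) to write
$$\sigma(N)=\sigma(P)+\sigma(Q)+\sigma(R)+\sigma(S).$$
It then suffices to compute the four block images $\sigma(P),\sigma(Q),\sigma(R),\sigma(S)$ separately and to add them entrywise.

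The crucial preliminary step is to record how $e$ interacts with the four summands. By \cite[Theorem 7.5.1]{Campbell1991}, $P^d$ is a polynomial in $P$ with zero constant term, say $P^d=\sum_{i\ge1}a_iP^i$. Feeding the hypotheses $PQ=QP=0$, $PS=0$ and $RP=0$ through this polynomial, I obtain $P^dQ=QP^d=0$, $P^dS=0$ and $RP^d=0$; for instance $RP^d=\sum_{i\ge1}a_iRP^i=\sum_{i\ge1}a_i(RP)P^{i-1}=0$. Consequently $eQ=Qe=0$, $eS=0$ and $Re=0$, while $e$ plainly commutes with $P$. These relations are the heart of the argument, and deriving them --- rather than the ensuing bookkeeping --- is the one place where the structural hypotheses are genuinely used.

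With these relations in hand the four images are immediate. Since $e$ commutes with $P$, the matrix $\sigma(P)$ is diagonal with entries $ePe=Pe=P^2P^d$ and $(1-e)P(1-e)=(1-e)P=PP^\pi$. Because $eQ=Qe=0$, only the lower-right block of $\sigma(Q)$ survives, equal to $Q$. For $R$ the relation $Re=0$ annihilates the two left-hand blocks, leaving $eR(1-e)=eR=PP^dR$ in position $(1,2)$ and $(1-e)R(1-e)=(1-e)R=P^\pi R$ in position $(2,2)$. Dually, $eS=0$ annihilates the two top blocks of $\sigma(S)$, leaving $(1-e)Se=Se=SPP^d$ in position $(2,1)$ and $(1-e)S(1-e)=SP^\pi$ in position $(2,2)$.

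Adding the four matrices block by block gives $P^2P^d$ in the $(1,1)$ entry, $PP^dR$ in the $(1,2)$ entry, $SPP^d$ in the $(2,1)$ entry, and $PP^\pi+Q+P^\pi R+SP^\pi=Q+P^\pi R+(S+P)P^\pi$ in the $(2,2)$ entry, which is exactly the asserted form of $\sigma(N)$. I expect the only real obstacle to be the justification of the annihilation identities $P^dQ=QP^d=P^dS=RP^d=0$; everything downstream is routine distribution over the idempotent $e$.
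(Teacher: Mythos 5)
Your proposal is correct and takes essentially the same approach as the paper: both choose the idempotent $e=PP^{d}$ and compute the blocks of $\sigma(N)$ by expanding $N=P+Q+R+S$ and using the hypotheses to annihilate cross terms, with the paper compressing your explicitly derived identities $P^{d}Q=QP^{d}=P^{d}S=RP^{d}=0$ into the phrase ``a calculation gives.'' Your organization via $\sigma(N)=\sigma(P)+\sigma(Q)+\sigma(R)+\sigma(S)$ is only a cosmetic rearrangement of the paper's direct evaluation of $eNe$, $eN(1-e)$, $(1-e)Ne$ and $(1-e)N(1-e)$.
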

\begin{proof}
Note that mapping $\sigma$ is an algebra isomorphism from $\mathbb{A}$ to $M_2(\mathbb{A},e)$.
Then
$$\sigma(N)=\begin{pmatrix}
  eNe&eN(1-e)\\
  (1-e)Ne&(1-e)N(1-e)
  \end{pmatrix}.$$
Let $P^{e}=PP^d$. Since $P^{e}$ is idempotent, a calculation gives
 \begin{align*}
   &P^{e}NP^{e}=P^{e}P,\\
   &P^{e}NP^{\pi}=P^{e}R,\\
   &P^{\pi}NP^{e}=SP^{e},\\
   &P^{\pi}NP^{\pi}=Q+P^{\pi}R+(S+P)P^{\pi}
 \end{align*}
as desired.
\end{proof}

In the above assumption, we consider the Drazin inverse of the lower right element of $\sigma(N)$.

\begin{lemma}\label{SP=SR=0,2-2}
Let $P,Q,R$ and $S$ $\in\mathbb{C}^{n\times n}$. If $PQ=QP=PS=SQ=QR=RP=0$ and $SP=SR=0$, then
  \begin{align*}
    &\big((S+Q)+P^{\pi}(R+P)\big)^{d}\\
   =&P^\pi(\sum_{j=0}^{n-1}(\sum_{k=1}^{j}P^kR^{j-k}+R^{\pi}R^j)-\sum_{i=0}^{m-1}\sum_{j=0}^{n-1}P^{i+1}(R^d)^{i+1}R^j)\\
&\quad\times(Q^\pi\sum_{h=0}^{l-1}Q^h(S^d)^{h+j+1}-\sum_{k=1}^{j}(Q^d)^k(S^d)^{j+1-k}+\sum_{h=0}^{l-1}(Q^d)^{h+j+1}S^{h}S^\pi)\\
&+P^\pi\sum_{j=0}^{n-1}\sum_{i=0}^{m-1}P^i(R^d)^{i+j+1}
(\sum_{k=0}^{j}Q^kS^{j-k}S^\pi-Q^\pi\sum_{h=0}^{l-1}Q^{h+1+j}(S^d)^{h+1}-\sum_{h=0}^{l-1}Q^{j+1}(Q^d)^{h+1}S^{h}S^\pi),
 \end{align*}
where
 \begin{align*}
 &\max\{\ind(S),\ind(Q)\}\leq l\leq \ind(S)+\ind(Q),\\
 &\max\{\ind(R),\ind(P)\}\leq m\leq \ind(R)+\ind(P),\\
 &\max\{\ind(S+Q),\ind(P^{\pi}(R+P))\}\leq n \leq \ind(S+Q)+\ind(P^{\pi}(R+P)).
 \end{align*}
\end{lemma}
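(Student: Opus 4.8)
The plan is to view the target matrix $(S+Q)+P^\pi(R+P)$ as a sum $A+B$ with $A=S+Q$ and $B=P^\pi(R+P)$, and to peel off its Drazin inverse by three successive applications of Lemma \ref{P+Q}. First I would record the annihilation relations forced by the hypotheses: since $SP=QP=0$ and $P^d$ is a polynomial in $P$ with zero constant term, $(S+Q)P^d=0$, whence $(S+Q)P^\pi=S+Q$ and $AB=(S+Q)(R+P)=SR+SP+QR+QP=0$. With $AB=0$, Lemma \ref{P+Q} (in the roles $P\mapsto A$, $Q\mapsto B$) yields
$$(A+B)^d=B^\pi\sum_{j=0}^{n-1}B^j(A^d)^{j+1}+\sum_{j=0}^{n-1}(B^d)^{j+1}A^jA^\pi,$$
where I replace the two distinct indices $\ind(A)$ and $\ind(B)$ by the common bound $n$; the surplus terms vanish because $B^\pi B^j=0$ for $j\ge\ind(B)$ and $A^jA^\pi=0$ for $j\ge\ind(A)$. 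The whole problem thereby reduces to computing $B^d,B^j,B^\pi$ on one side and $A^d,A^j,A^\pi$ on the other.

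For the $B$-side I would decompose $B=U+V$ with $U=P^\pi R$ and $V=P^\pi P=PP^\pi$. Here $UV=P^\pi R P^\pi P=P^\pi RP=0$ (using $RP^\pi=R$, then $RP=0$), $V^k=P^\pi P^k=P^k-P^{k+1}P^d$ vanishes at $k=\ind(P)$ so $V$ is nilpotent and $V^d=0$, and $(P^\pi R)^i=P^\pi R^i$, so Lemma \ref{(eaf)^{d}=ea^{d}f} (with $e=P^\pi$, $f=I$) gives $U^d=P^\pi R^d$. Feeding $V^d=0$ and $V^\pi=I$ into Lemma \ref{P+Q} collapses the formula for $B$ to $B^d=\sum_{i=0}^{m-1}V^i(U^d)^{i+1}=P^\pi\sum_{i=0}^{m-1}P^i(R^d)^{i+1}$; since $R^dP=0$, an easy induction then gives $(B^d)^{j+1}=P^\pi\sum_{i=0}^{m-1}P^i(R^d)^{i+j+1}$, which is exactly the prefactor of the second sum. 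For the first sum I would expand $B^j=\sum_{a=0}^{j}V^aU^{j-a}=P^\pi\sum_{a=0}^{j}P^aR^{j-a}$, compute $BB^d=P^\pi\big[RR^d+\sum_i P^{i+1}(R^d)^{i+1}\big]$, and verify $B^\pi B^j=B^j-(BB^d)B^j=P^\pi\big(\sum_{k=1}^{j}P^kR^{j-k}+R^\pi R^j-\sum_i P^{i+1}(R^d)^{i+1}R^j\big)$, i.e.\ $P^\pi$ times the first bracketed factor; the cancellations $R^\pi R^j=R^j-RR^dR^j$ together with $RR^dP^a=(R^d)^{i+1}P^a=0$ for $a\ge1$ drive this.

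For the $A$-side I would apply Lemma \ref{P+Q} to $A=S+Q$ (legitimate since $SQ=0$) to obtain $A^d=Q^\pi\sum_h Q^h(S^d)^{h+1}+\sum_h(Q^d)^{h+1}S^hS^\pi$, and use $SQ=0$ to get the closed form $A^j=(S+Q)^j=\sum_{k=0}^{j}Q^kS^{j-k}$. It then remains to identify the two $A$-factors in the displayed expression: that $(A^d)^{j+1}$ equals the second bracketed factor $Q^\pi\sum_h Q^h(S^d)^{h+j+1}-\sum_{k=1}^{j}(Q^d)^k(S^d)^{j+1-k}+\sum_h(Q^d)^{h+j+1}S^hS^\pi$, and that $A^jA^\pi=A^j-A^{j+1}A^d$ equals the third bracketed factor. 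Substituting the four computed pieces into the display and factoring $P^\pi$ to the front reproduces the claimed identity.

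I expect the main obstacle to be precisely these two $A$-side identities. Unlike the $B$-side, where the nilpotency $V^d=0$ and the relation $R^dP=0$ collapse everything to a single clean geometric-type sum, the block $A=S+Q$ is genuinely non-degenerate: neither $S$ nor $Q$ is nilpotent, so $(A^d)^{j+1}$ does not reduce to one such sum. Establishing $(A^d)^{j+1}=\text{(factor two)}$ and $A^jA^\pi=\text{(factor three)}$ requires a careful induction on $j$ that tracks the mixed $S$--$Q$ words and repeatedly uses $SQ=0$ together with the derived relations $S^dQ=SQ^d=0$, $Q^\pi S^d=S^d$ and $S^\pi Q^d=Q^d$. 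This bookkeeping, rather than any conceptual difficulty, is where the real work lies.
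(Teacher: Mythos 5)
Your proposal is correct and follows essentially the same route as the paper: the identical splitting into $S+Q$ and $P^\pi(R+P)$ with product zero fed into Lemma \ref{P+Q}, the two factors then resolved by Lemma \ref{P+Q} again (via $SQ=0$ and $RP=0$) and by Lemma \ref{(eaf)^{d}=ea^{d}f}; your variant computation of the second block --- splitting off the nilpotent $P^\pi P$ from $P^\pi R$ rather than first writing $(P^\pi(R+P))^d=P^\pi(R+P)^d$ and then expanding $(R+P)^d$ --- lands on the same expression, and the power identities for $(S+Q)^d$ that you defer are exactly the ones the paper also asserts without detailed proof. One slip to fix in your closing paragraph: the relation you will actually need is $S^dQ^\pi=S^d$ (a consequence of $S^dQ=0$), not $Q^\pi S^d=S^d$, which can fail under the one-sided hypothesis $SQ=0$.
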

\begin{proof}
  We first note that $(S+Q)\cdot P^{\pi}(R+P)=0$.
By  Lemma \ref{P+Q}, we have
\begin{multline*}
  \big((S+Q)+P^{\pi}(R+P)\big)^{d}
  =\big(P^{\pi}(R+P)\big)^{\pi}\sum_{j=0}^{n-1}(P^{\pi}(R+P))^{j}\big((S+Q)^d\big)^{j+1}\\
  +\sum_{j=0}^{n-1}((P^{\pi}(R+P))^{d})^{j+1}(S+Q)^{j}(S+Q)^{\pi},
\end{multline*}
where
$$\max\{\ind(S+Q),\ind(P^{\pi}(R+P))\}\leq n \leq \ind(S+Q)+\ind(P^{\pi}(R+P)).$$
Since $P^{\pi}(R+P)P^{\pi}=P^{\pi}(R+P)$, we have $(P^{\pi}(R+P))^i=P^{\pi}(R+P)^i$ for $i\geq1$.
Then, by Lemma \ref{(eaf)^{d}=ea^{d}f},
\begin{align}\label{Ppi(R+P)}
(P^{\pi}(R+P))^d=P^{\pi}(R+P)^d\quad\text{and}\quad(P^{\pi}(R+P))^{\pi}=1-P^{\pi}(R+P)(R+P)^{d}.
\end{align}
The hypothesis $SQ=0$  gives
\begin{align}\label{(S+Q)^{d}}
  (S+Q)^{d}=Q^{\pi}\sum_{h=0}^{l-1}Q^{h}(S^{d})^{h+1}+\sum_{h=0}^{l-1}(Q^{d})^{h+1}S^{h}S^{\pi},
\end{align}
where $\max\{\ind(S),\ind(Q)\}\leq l\leq \ind(S)+\ind(Q)$.
Because $RP=0$, we get
\begin{align}\label{(R+P)^{d}}
  (R+P)^{d}=P^{\pi}\sum_{i=0}^{m-1}P^{i}(R^{d})^{i+1}+\sum_{i=0}^{m-1}(P^{d})^{i+1}R^{i}R^{\pi},
\end{align}
where $\max\{\ind(R),\ind(P)\}\leq m\leq \ind(R)+\ind(P)$. Combining $\eqref{Ppi(R+P)}$, $\eqref{(S+Q)^{d}}$ and $\eqref{(R+P)^{d}}$ we have
$$((S+Q)+P^{\pi}(R+P))^{d}=X_1+X_2,$$ where
$$X_1=(1-P^{\pi}(R+P)(R+P)^{d})(1+\sum_{j=1}^{n-1}P^{\pi}(R+P)^{j})[(S+Q)^d]^{j+1}$$
and
$$X_2=\sum_{j=0}^{n-1}P^{\pi}[(R+P)^{d}]^{j+1}(S+Q)^{j}(1-(S+Q)(S+Q)^{d}).$$
Since
\begin{align*}
    X_1=&(1-P^{\pi}(R+P)(P^{\pi}\sum_{i=0}^{m-1}P^{i}(R^{d})^{i+1}+\sum_{i=0}^{m-1}(P^{d})^{i+1}R^{i}R^{\pi}))\\
    &\quad\times(1+\sum_{j=1}^{n-1}P^{\pi}(R+P)^{j})(Q^{\pi}\sum_{h=0}^{l-1}Q^{h}(S^{d})^{h+1}+\sum_{h=0}^{l-1}(Q^{d})^{h+1}S^{h}S^{\pi})^{j+1}\\
    =&(1-P^{\pi}(R+P)\sum_{i=0}^{m-1}P^{i}(R^{d})^{i+1})\\
    &\quad\times(P^\pi+\sum_{j=1}^{n-1}P^\pi(R+P)^j)
    (Q^\pi\sum_{h=0}^{l-1}Q^h(S^d)^{h+j+1}-\sum_{k=1}^{j}(Q^d)^k(S^d)^{j+1-k}+\sum_{h=0}^{l-1}(Q^d)^{h+j+1}S^{h}S^\pi)\\
    =&(1-P^\pi RR^d-P^\pi\sum_{i=0}^{m-1}P^{i+1}(R^d)^{i+1})\\
    &\quad\times
    P^\pi\sum_{j=0}^{n-1}\sum_{k=0}^{j}P^kR^{j-k}
    (Q^\pi\sum_{h=0}^{l-1}Q^h(S^d)^{h+j+1}-\sum_{k=1}^{j}(Q^d)^k(S^d)^{j+1-k}+\sum_{h=0}^{l-1}(Q^d)^{h+j+1}S^{h}S^\pi)\\
=&P^\pi(\sum_{j=0}^{n-1}(\sum_{k=1}^{j}P^kR^{j-k}+R^{\pi}R^j)-\sum_{i=0}^{m-1}\sum_{j=0}^{n-1}P^{i+1}(R^d)^{i+1}R^j)\\
&\quad\times(Q^\pi\sum_{h=0}^{l-1}Q^h(S^d)^{h+j+1}-\sum_{k=1}^{j}(Q^d)^k(S^d)^{j+1-k}+\sum_{h=0}^{l-1}(Q^d)^{h+j+1}S^{h}S^\pi)\\
 \end{align*} and
 \begin{align*}
    X_2&=\sum_{j=0}^{n-1}P^{\pi}(P^{\pi}\sum_{i=0}^{m-1}P^{i}(R^{d})^{i+1}+\sum_{i=0}^{m-1}(P^{d})^{i+1}R^{i}R^{\pi})^{j+1}\\
    &\quad\times(S+Q)^{j}(1-(S+Q)(Q^{\pi}\sum_{h=0}^{l-1}Q^{h}(S^{d})^{h+1}+\sum_{h=0}^{l-1}(Q^{d})^{h+1}S^{h}S^{\pi}))\\
   =&P^{\pi}\sum_{j=0}^{n-1}(\sum_{i=0}^{m-1}P^{i}(R^{d})^{i+1})^{j+1}\\
    &\quad\times\sum_{k=0}^{j}Q^kS^{j-k}(1-SS^d-Q^\pi\sum_{h=0}^{l-1}Q^{h+1}(S^d)^{h+1}-\sum_{h=0}^{l-1}Q(Q^d)^{h+1}S^{h}S^\pi)\\
    =&P^\pi\sum_{j=0}^{n-1}\sum_{i=0}^{m-1}P^i(R^d)^{i+j+1}(\sum_{k=0}^{j}Q^kS^{j-k}S^\pi-Q^\pi\sum_{h=0}^{l-1}Q^{h+1+j}(S^d)^{h+1}-\sum_{h=0}^{l-1}Q^{j+1}(Q^d)^{h+1}S^{h}S^\pi),
 \end{align*}
 we finish the proof.
\end{proof}

\begin{lemma}\label{triangle}(\cite{Hartwig(1977)} and \cite{Meyer(1977)})
  Let $M=\left[\begin{array}{cc}
                   A & C \\
                   0 & D
                 \end{array}\right]$
     and let $N=\left[\begin{array}{cc}
                   D & 0 \\
                   C & A
                 \end{array}\right]\in \mathbb{C}^{n\times n}$,
  where $A$ and $D$ are square matrices. Then
  $$M^{d}=\left[\begin{array}{cc}
                   A^{d} & X \\
                   0 & D^{d}
                 \end{array}\right] ~~~\text{and}~~
  N^{d}=\left[\begin{array}{cc}
                   D^{d} & 0 \\
                   X & A^{d}
                 \end{array}\right],$$
   where $$
   X=\sum_{i=0}^{s-1}(A^{d})^{i+2}CD^{i}D^{\pi}+A^{\pi}\sum_{i=0}^{r-1}A^{i}C(D^{d})^{i+2}-A^{d}CD^{d},$$
 $r=\ind(A)$ and $s=\ind(D)$.
\end{lemma}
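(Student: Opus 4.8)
My plan is to obtain the formula for the upper triangular matrix $M$ directly from the additive formula of Lemma \ref{P+Q}, and then to deduce the lower triangular case for $N$ by a permutation similarity, rather than checking the three Drazin axioms by hand.

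First I split $M$ as $M=P+Q$, where
$$P=\begin{bmatrix} 0 & 0 \\ 0 & D\end{bmatrix}\qquad\text{and}\qquad Q=\begin{bmatrix} A & C \\ 0 & 0\end{bmatrix}.$$
A one-line computation gives $PQ=0$, so Lemma \ref{P+Q} applies with $s=\ind(P)$ and $t=\ind(Q)$. The relevant quantities are transparent: since $P^{i}=\left[\begin{smallmatrix}0&0\\0&D^{i}\end{smallmatrix}\right]$ we get $P^{d}=\left[\begin{smallmatrix}0&0\\0&D^{d}\end{smallmatrix}\right]$, $P^{\pi}=\left[\begin{smallmatrix}I&0\\0&D^{\pi}\end{smallmatrix}\right]$ and $\ind(P)=\ind(D)=s$; and since $Q^{i}=\left[\begin{smallmatrix}A^{i}&A^{i-1}C\\0&0\end{smallmatrix}\right]$ for $i\ge1$, a short verification of the three defining properties (using $AA^{d}=A^{d}A$) shows $Q^{d}=\left[\begin{smallmatrix}A^{d}&(A^{d})^{2}C\\0&0\end{smallmatrix}\right]$, whence $(Q^{d})^{i+1}=\left[\begin{smallmatrix}(A^{d})^{i+1}&(A^{d})^{i+2}C\\0&0\end{smallmatrix}\right]$ and $Q^{\pi}=\left[\begin{smallmatrix}A^{\pi}&-A^{d}C\\0&I\end{smallmatrix}\right]$.

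Substituting these into Lemma \ref{P+Q} and multiplying out the blocks, the first sum $Q^{\pi}\sum_{i}Q^{i}(P^{d})^{i+1}$ produces $D^{d}$ in the $(2,2)$ block and $A^{\pi}\sum_{j}A^{j}C(D^{d})^{j+2}-A^{d}CD^{d}$ in the $(1,2)$ block, while the second sum $\sum_{i}(Q^{d})^{i+1}P^{i}P^{\pi}$ produces $A^{d}$ in the $(1,1)$ block and $\sum_{i}(A^{d})^{i+2}CD^{i}D^{\pi}$ in the $(1,2)$ block; all $(2,1)$ contributions vanish. Adding the two $(1,2)$ contributions reproduces exactly $X$, and the matrix $\left[\begin{smallmatrix}A^{d}&X\\0&D^{d}\end{smallmatrix}\right]$ results. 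The step requiring the most care is matching the summation ranges to the stated limits $r=\ind(A)$ and $s=\ind(D)$: the limit $s$ is automatic since $\ind(P)=\ind(D)$, but for the other sum I would first extend its upper index freely, which is legitimate because $Q^{\pi}Q^{i}=Q^{i}-Q^{i+1}Q^{d}=0$ for $i\ge\ind(Q)$, and then truncate it back to $r-1$ using $A^{\pi}A^{j}=0$ for $j\ge r$; the terms $D^{i}D^{\pi}=0$ for $i\ge s$ dispose of the remaining surplus.

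Finally, the lower triangular case costs nothing. With the block swap $J=\left[\begin{smallmatrix}0&I\\I&0\end{smallmatrix}\right]$ one has $J=J^{-1}$, and a direct multiplication gives $JMJ=N$. Since the Drazin inverse is invariant under similarity, $N^{d}=JM^{d}J=\left[\begin{smallmatrix}D^{d}&0\\X&A^{d}\end{smallmatrix}\right]$, which is the asserted expression and completes the proof.
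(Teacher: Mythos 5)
Your proposal is correct, but there is no paper proof to compare it with: the paper imports Lemma \ref{triangle} as a classical result of Hartwig--Shoaf and Meyer--Rose, where it is established directly (verification of the three Drazin equations, via core--nilpotent decompositions of $A$ and $D$). Your route is genuinely different: you split $M=P+Q$ with $P=\left[\begin{smallmatrix}0&0\\0&D\end{smallmatrix}\right]$, $Q=\left[\begin{smallmatrix}A&C\\0&0\end{smallmatrix}\right]$, check $PQ=0$, and feed the elementary data $P^{d}$, $P^{\pi}$, $Q^{d}$, $Q^{\pi}$ into Lemma \ref{P+Q}; your block bookkeeping is right, including the one delicate point (extending the first sum past $\ind(Q)$ using $Q^{\pi}Q^{i}=0$ for $i\ge\ind(Q)$, then re-truncating with $A^{\pi}A^{j}=0$ for $j\ge r$ and $D^{i}D^{\pi}=0$ for $i\ge s$), and the permutation similarity settles the lower triangular case. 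Two caveats are worth recording. First, a logical one: in the literature Lemma \ref{P+Q} is itself usually proved \emph{from} the triangular formula --- when $PQ=0$ one writes $P+Q=\left[\,I\ \ Q\,\right]\left[\begin{smallmatrix}P\\I\end{smallmatrix}\right]$, observes $\left[\begin{smallmatrix}P\\I\end{smallmatrix}\right]\left[\,I\ \ Q\,\right]=\left[\begin{smallmatrix}P&0\\I&Q\end{smallmatrix}\right]$, and applies Cline's formula together with the Drazin inverse of this triangular matrix --- so as a foundational argument yours is circular; within this paper, however, both lemmas are black-box citations, so your derivation is a legitimate (and instructive) consistency proof rather than an internal circularity. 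Second, two harmless imprecisions: $\ind(P)=\max\{1,\ind(D)\}$ rather than $\ind(D)$ when $D$ is nonsingular (the surplus terms all contain $D^{i}D^{\pi}=0$, and the indispensable $i=0$ term giving $A^{d}$ survives since $\ind(P)\ge 1$), and when $A\in\mathbb{C}^{p\times p}$, $D\in\mathbb{C}^{q\times q}$ with $p\neq q$ the swap matrix is $J=\left[\begin{smallmatrix}0&I_{q}\\I_{p}&0\end{smallmatrix}\right]$ with $J^{-1}=J^{T}\neq J$; the similarity argument $N=JMJ^{-1}$, $N^{d}=JM^{d}J^{-1}$ is unaffected.
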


\begin{theorem}\label{pqrssum}
Let $P,Q,R$ and $S$ $\in\mathbb{C}^{n\times n}$ and let $N=P+Q+R+S$. If $PQ=QP=0,PS=SQ=QR=RP=0$ and $SP=SR=0$, then
  \begin{equation*}
 N^{d}= P^{d}+X+T^{d},
\end{equation*}
where
 \begin{align*}
 T=&(S+Q)+P^{\pi}(R+P),\\
   X=&\sum_{k=0}^{g-1}(P^{d})^{k+2}RT^{k}T^{\pi}-P^{d}RT^{d},\\
   T^{d}=&P^\pi(\sum_{j=0}^{n-1}(\sum_{k=1}^{j}P^kR^{j-k}+R^{\pi}R^j)-\sum_{i=0}^{m-1}\sum_{j=0}^{n-1}P^{i+1}(R^d)^{i+1}R^j)\\
&\quad\times(Q^\pi\sum_{h=0}^{l-1}Q^h(S^d)^{h+j+1}-\sum_{k=1}^{j}(Q^d)^k(S^d)^{j+1-k}+\sum_{h=0}^{l-1}(Q^d)^{h+j+1}S^{h}S^\pi)\\
&+P^\pi\sum_{j=0}^{n-1}\sum_{i=0}^{m-1}P^i(R^d)^{i+j+1}
(\sum_{k=0}^{j}Q^kS^{j-k}S^\pi-Q^\pi\sum_{h=0}^{l-1}Q^{h+1+j}(S^d)^{h+1}-\sum_{h=0}^{l-1}Q^{j+1}(Q^d)^{h+1}S^{h}S^\pi),
 \end{align*}
for
 \begin{align*}
 & \ind(T)=g,\\
 &\max\{\ind(S),\ind(Q)\}\leq l\leq \ind(S)+\ind(Q),\\
 &\max\{\ind(R),\ind(P)\}\leq m\leq \ind(R)+\ind(P),\\
 &\max\{\ind(S+Q),\ind(P^{\pi}(R+P))\}\leq n \leq \ind(S+Q)+\ind(P^{\pi}(R+P)).
 \end{align*}
\end{theorem}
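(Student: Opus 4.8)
The plan is to realize $N$ inside the matrix algebra $M_2(\mathbb{A},e)$ through the isomorphism $\sigma$ with the idempotent $e=PP^d$, reduce to a block upper triangular matrix, and then invoke Lemma~\ref{triangle}. First I would apply the lemma computing $\sigma(N)$ (whose hypotheses $PQ=QP=PS=SQ=QR=RP=0$ are all among our assumptions) to get
\[
\sigma(N)=\begin{pmatrix} P^2P^d & PP^dR \\ SPP^d & Q+P^\pi R+(S+P)P^\pi\end{pmatrix}.
\]
The extra hypothesis $SP=0$ then does two jobs at once. Since $SPP^d=(SP)P^d=0$, the $(2,1)$ entry vanishes, so $\sigma(N)$ becomes block upper triangular; and since $SP^\pi=S-SPP^d=S$, the $(2,2)$ entry collapses to $Q+P^\pi R+S+PP^\pi=(S+Q)+P^\pi(R+P)=T$. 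Thus $\sigma(N)=\begin{pmatrix} P^2P^d & PP^dR \\ 0 & T\end{pmatrix}$.

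Next I would pin down the Drazin inverse of the $(1,1)$ block $A:=P^2P^d$ inside the corner $e\mathbb{A}e$, whose identity is $e=PP^d$. Using $P^dPP^d=P^d$ one checks $AP^d=P^dA=e$, so $A$ is invertible in this corner with $A^d=P^d$ and, crucially, $A^\pi=e-AA^d=0$. This is the key simplification: applying Lemma~\ref{triangle} to $\begin{pmatrix} A & C \\ 0 & T\end{pmatrix}$ with $C=PP^dR$ and $\ind(T)=g$, the entire middle term $A^\pi\sum_{i}A^iC(T^d)^{i+2}$ drops out, leaving
\[
\sigma(N)^d=\begin{pmatrix} P^d & X \\ 0 & T^d\end{pmatrix},\qquad X=\sum_{k=0}^{g-1}(P^d)^{k+2}\,PP^dR\,T^kT^\pi-P^d\,PP^dR\,T^d.
\]
Repeated use of $(P^d)^{k}PP^d=(P^d)^{k}$ for $k\ge1$ then reduces $X$ to exactly the claimed $\sum_{k=0}^{g-1}(P^d)^{k+2}RT^kT^\pi-P^dRT^d$.

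Finally, the closed form of $T^d=\big((S+Q)+P^\pi(R+P)\big)^d$ is supplied verbatim by Lemma~\ref{SP=SR=0,2-2}, whose hypotheses are precisely $PQ=QP=PS=SQ=QR=RP=0$ together with $SP=SR=0$; this is where the second extra assumption $SR=0$ is consumed. Reading off the four blocks of $\sigma(N)^d$ and invoking Lemma~\ref{M=P+Q+R+S:equivalentM,N}(2) yields $N^d=P^d+X+0+T^d$, as desired. I expect the main obstacle to be bookkeeping rather than ideas: verifying carefully that $A=P^2P^d$ is invertible in the corner (so that $A^\pi=0$ and the triangular formula truncates to a single sum) and then pushing the $PP^d$ factors through the powers of $P^d$ to land on the stated $X$. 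Once the block upper triangular picture is in place, the remainder is a routine computation driven by Lemmas~\ref{SP=SR=0,2-2} and~\ref{triangle}.
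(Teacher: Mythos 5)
Your proposal is correct and follows essentially the same route as the paper: the $\sigma$-isomorphism with $e=PP^d$, collapse to a block upper triangular form via $SP=0$, Lemma~\ref{triangle} for the triangular Drazin inverse (with the middle term vanishing since $P^2P^d$ is invertible in the corner $e\mathbb{A}e$, which the paper encodes as $\ind(P^eP)=1$ with $(P^eP)^\pi=0$), Lemma~\ref{SP=SR=0,2-2} for $T^d$, and Lemma~\ref{M=P+Q+R+S:equivalentM,N}(2) to read off $N^d=P^d+X+T^d$. No gaps; the simplification of $X$ via $P^dPP^d=P^d$ matches the paper's computation.
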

\begin{proof}
Using Lemma \ref{M=P+Q+R+S:equivalentM,N}, we have
  \begin{equation*}
    \sigma(N)=\left[\begin{array}{cc}
       P^{e}P & P^{e}R \\
       SP^{e} & Q+P^{\pi}R+(S+P)P^{\pi}
     \end{array}\right]
  \end{equation*}
and so, by $SP=0$,
 \begin{equation*}
    \sigma(N)=\left[\begin{array}{cc}
       P^{e}P & P^{e}R \\
            0 & (S+Q)+P^{\pi}(R+P)
     \end{array}\right].
  \end{equation*}
Note that $T=(S+Q)+P^{\pi}(R+P)$.
Combining Lemma \ref{triangle} and Lemma \ref{SP=SR=0,2-2} we get
 \begin{equation*}
   \sigma^{d}(N)=\left[\begin{array}{cc}
           P^{d} & X \\
             0   & T^{d}
         \end{array}\right],
 \end{equation*}
where
 \begin{align*}
   X=&\sum_{k=0}^{g-1}(P^{d})^{k+2}\cdot P^{e}R\cdot T^{k}T^{\pi}
   +(P^{e}P)^{\pi}\sum_{k=0}^{f-1}(P^{e}P)^{k}\cdot P^{e}R\cdot (T^{d})^{k+2}
   -(P^{e}P)^{d}\cdot P^{e}R\cdot T^{d}\\
    =&\sum_{k=0}^{g-1}(P^{d})^{k+2}RT^{k}T^{\pi}-P^{d}RT^{d},\\
   T^{d}=&P^\pi(\sum_{j=0}^{n-1}(\sum_{k=1}^{j}P^kR^{j-k}+R^{\pi}R^j)-\sum_{i=0}^{m-1}\sum_{j=0}^{n-1}P^{i+1}(R^d)^{i+1}R^j)\\
&\quad\times(Q^\pi\sum_{h=0}^{l-1}Q^h(S^d)^{h+j+1}-\sum_{k=1}^{j}(Q^d)^k(S^d)^{j+1-k}+\sum_{h=0}^{l-1}(Q^d)^{h+j+1}S^{h}S^\pi)\\
&+P^\pi\sum_{j=0}^{n-1}\sum_{i=0}^{m-1}P^i(R^d)^{i+j+1}
(\sum_{k=0}^{j}Q^kS^{j-k}S^\pi-Q^\pi\sum_{h=0}^{l-1}Q^{h+1+j}(S^d)^{h+1}-\sum_{h=0}^{l-1}Q^{j+1}(Q^d)^{h+1}S^{h}S^\pi),
 \end{align*}
for
 \begin{align*}
 &\ind(P^{e}P)=f=1, \ind(T)=g,\\
 &\max\{\ind(S),\ind(Q)\}\leq l\leq \ind(S)+\ind(Q),\\
 &max\{\ind(R),\ind(P)\}\leq m\leq \ind(R)+\ind(P),\\
 &max\{\ind(S+Q),\ind(P^{\pi}(R+P))\}\leq n \leq \ind(S+Q)+\ind(P^{\pi}(R+P)).
 \end{align*}
Considering Lemma \ref{M=P+Q+R+S:equivalentM,N}, we obtain
\begin{equation*}
 N^{d}= P^{d}+X+T^{d}
\end{equation*}
as desired.
\end{proof}

The following conditions have been considered in \cite[Theorem 4.1]{Chen2008}.
As a direct corollary of Theorem \ref{pqrssum}, the new representation of the Drazin inverse of the sum
$P+Q+R+S$.
\begin{corollary}\label{corPQRS-3.7}{\rm \cite[Theorem 4.1]{Chen2008}} Let $P,Q,R$ and $S$ $\in\mathbb{C}^{n\times n}$ and let $N=P+Q+R+S$.
If $PQ=QP=0,PS=SQ=QR=RP=0$, $SP=SR=0$ and $R^d=S^d=0$, then
  \begin{equation*}
 N^{d}= P^{d}+X+T^{d},
\end{equation*}
where $T=(S+Q)+P^{\pi}(R+P)$,
 \begin{align*}
   X=&\sum_{k=0}^{g-1}(P^{d})^{k+2}RT^{k}T^{\pi}-P^{d}\sum_{j=0}^{n-1}R^{j+1}\sum_{h=0}^{l-1}(Q^d)^{h+j+1}S^{h},\\
   T^{d}=&P^\pi\sum_{j=0}^{n-1}\sum_{k=0}^{j}P^kR^{j-k}\sum_{h=0}^{l-1}(Q^d)^{h+j+1}S^{h},
 \end{align*}
for
 \begin{align*}
 & \ind(T)=g,\\
 &\max\{\ind(S),\ind(Q)\}\leq l\leq \ind(S)+\ind(Q),\\
 &\max\{\ind(S+Q),\ind(P^{\pi}(R+P))\}\leq n \leq \ind(S+Q)+\ind(P^{\pi}(R+P)).
 \end{align*}
\end{corollary}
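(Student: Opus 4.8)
The plan is to obtain the statement as a direct specialization of Theorem \ref{pqrssum}. The hypotheses $PQ=QP=0$, $PS=SQ=QR=RP=0$ and $SP=SR=0$ are exactly those of Theorem \ref{pqrssum}, so that theorem applies verbatim and yields $N^d = P^d + X + T^d$ with $T = (S+Q) + P^\pi(R+P)$ together with the general formulas for $X$ and $T^d$ recorded there. It then remains to feed in the two extra hypotheses $R^d = S^d = 0$ and watch those formulas collapse. First I would record the elementary consequences of nilpotency: from $R^d=0$ we get $R^\pi = I - RR^d = I$ and $(R^d)^i = 0$ for every $i \geq 1$, and symmetrically from $S^d = 0$ we get $S^\pi = I$ and $(S^d)^i = 0$ for $i \geq 1$.

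Next I would simplify $T^d$. In the first factor of the first summand the term $-\sum_{i=0}^{m-1}\sum_{j=0}^{n-1}P^{i+1}(R^d)^{i+1}R^j$ vanishes, since every exponent $i+1$ on $R^d$ is at least $1$; meanwhile $R^\pi R^j = R^j = P^0R^{j}$ merges with $\sum_{k=1}^{j}P^kR^{j-k}$ to give $\sum_{k=0}^{j}P^kR^{j-k}$. In the accompanying $j$-indexed factor, the terms carrying $(S^d)^{h+j+1}$ and $(S^d)^{j+1-k}$ all vanish (the exponents stay $\geq 1$ over the summation ranges), while $S^\pi = I$ leaves exactly $\sum_{h=0}^{l-1}(Q^d)^{h+j+1}S^h$. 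Finally the entire last summand of $T^d$ dies, being prefixed by $(R^d)^{i+j+1}$ with $i+j+1 \geq 1$. Collecting the survivors gives $T^d = P^\pi\sum_{j=0}^{n-1}\sum_{k=0}^{j}P^kR^{j-k}\sum_{h=0}^{l-1}(Q^d)^{h+j+1}S^h$, as claimed; note that the parameter $m$ disappears entirely, which is why it is absent from the corollary.

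For $X$, the sum $\sum_{k=0}^{g-1}(P^d)^{k+2}RT^kT^\pi$ is untouched, so the work is confined to the term $-P^dRT^d$. Here I would exploit $RP=0$: since $P^d$ is a polynomial in $P$ with zero constant term, $RP^d = 0$, hence $RP^\pi = R(I-PP^d) = R$, and likewise $RP^k = 0$ for $k \geq 1$. Substituting the simplified $T^d = P^\pi(\cdots)$ and using $RP^\pi = R$ to clear the leading projector, the inner polynomial collapses by $R\sum_{k=0}^{j}P^kR^{j-k} = R^{j+1}$, giving $-P^dRT^d = -P^d\sum_{j=0}^{n-1}R^{j+1}\sum_{h=0}^{l-1}(Q^d)^{h+j+1}S^h$, which is precisely the second term of the asserted $X$.

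The argument is almost entirely bookkeeping, and I expect no conceptual obstacle once Theorem \ref{pqrssum} is in hand. The one step demanding care is the collapse of $-P^dRT^d$: one must genuinely deduce $RP^\pi = R$ from $RP=0$ rather than assume it, and then telescope $R\sum_{k=0}^{j}P^kR^{j-k}$ down to $R^{j+1}$ by discarding every term in which a positive power of $P$ sits immediately to the right of $R$. Keeping the summation limits and the vanishing-exponent conditions straight across the two factors of $T^d$ is the only place where an error could realistically creep in.
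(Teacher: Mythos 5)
Your proposal is correct and follows essentially the same route as the paper: specialize Theorem \ref{pqrssum} under $R^d=S^d=0$, simplify $T^d$ to $P^\pi\sum_{j=0}^{n-1}\sum_{k=0}^{j}P^kR^{j-k}\sum_{h=0}^{l-1}(Q^d)^{h+j+1}S^{h}$, and collapse $-P^dRT^d$ via $RP^\pi=R$ and $R\sum_{k=0}^{j}P^kR^{j-k}=R^{j+1}$. The paper's proof states these simplifications without detail; your write-up merely fills in the bookkeeping it calls ``obvious.''
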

\begin{proof}
If we combine $R^d=S^d=0$ and Theorem \ref{pqrssum}, then the expression for $T^d$ can be simplified as follows:
  \begin{align*}
   T^{d}=&P^\pi\sum_{j=0}^{n-1}\sum_{k=0}^{j}P^kR^{j-k}\sum_{h=0}^{l-1}(Q^d)^{h+j+1}S^{h}.
  \end{align*}
  Hence,
  \begin{align*}
   RT^{d}=&\sum_{j=0}^{n-1}R^{j+1}\sum_{h=0}^{l-1}(Q^d)^{h+j+1}S^{h}.
  \end{align*}
  The rest of the proof is obvious.
\end{proof}

\begin{remark}
We conclude this section with a remark. Using a way similar to
Theorem \ref{pqrssum} we can give an expression of the Drazin
inverse of the sum $P+Q+R+S$ under the following condition:
 $$PQ=QP=0,PS=SQ=QR=RP=0, RS=QS=0,$$
which generalized the condition of \cite[Theorem 4.2]{Chen2008}.
\end{remark}

\section{Applications}
In this section, as an application of previous results, we
obtain representations for the Drazin inverse of a $2 \times 2$
block matrix. Throughout this section let
$$M=\left[\begin{array}{cc}
                   A & B \\
                   C & D
                 \end{array}\right]\quad
                 {\rm and}\quad Z=D-CA^dB,$$
where $A\in\mathbb{C}^{m\times m}$, $B\in\mathbb{C}^{m\times n}$, $C\in\mathbb{C}^{n\times m}$ and $D\in\mathbb{C}^{n\times n}$.

First, we state one auxiliary result.

\begin{lemma}\label{miaoDrazin}{\rm \cite{Miao}}
If $CA^\pi=0$, $A^\pi B=0$ and the generalized Schur complement $Z=D-CA^dB=0$, then
  $$M^{d}=\left[\begin{array}{c}
                   I \\
                   CA^{d}
                 \end{array}\right]
                 [(AW)^d]^2A
                 \left[\begin{array}{cc}
                   I & A^{d}B
                 \end{array}\right],$$
where $W=AA^d+A^dBCA^d$.
\end{lemma}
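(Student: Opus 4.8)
The plan is to reduce the $2\times 2$ block matrix $M$ to a triple product through the single square factor $A$, and then to transfer the computation of $M^d$ to that factor by a Cline‑type identity. First I would record the consequences of the three hypotheses: $CA^\pi=0$ gives $C=CAA^d$, $A^\pi B=0$ gives $B=AA^dB$, and $Z=0$ gives $D=CA^dB$. Setting
\[
F=\begin{bmatrix}I\\ CA^d\end{bmatrix},\qquad G=\begin{bmatrix}I & A^dB\end{bmatrix},
\]
a direct multiplication gives $FAG=\begin{bmatrix}A & AA^dB\\ CA^dA & CA^dAA^dB\end{bmatrix}$, and the three identities above (together with $AA^d=A^dA$) collapse the four blocks to $A,B,C,D$ respectively. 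Hence $M=FAG$.

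Next I would apply the classical Cline formula $(PQ)^d=P\bigl((QP)^d\bigr)^2Q$ to the grouping $M=F\cdot(AG)$. This yields $M^d=F\bigl((AGF)^d\bigr)^2AG$, so the whole problem is transferred to the $m\times m$ matrix $AGF$. Since $GF=I+A^dBCA^d$, we obtain $AGF=A+AA^dBCA^d$, whereas the matrix appearing in the statement is $AW=A^2A^d+AA^dBCA^d$. Thus the only thing left is to prove $(AGF)^d=(AW)^d$; once this is established, $M^d=F\bigl((AW)^d\bigr)^2AG$ is exactly the claimed expression, because $AG=A\begin{bmatrix}I & A^dB\end{bmatrix}$ and $F=\begin{bmatrix}I\\ CA^d\end{bmatrix}$.

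The heart of the argument, and the step I expect to be the main obstacle, is the identity $(AGF)^d=(AW)^d$, which I would prove by decomposing everything along the idempotent $AA^d$. A short computation shows $AGF=AW+AA^\pi$, that $AW$ is supported on the range of $AA^d$ (that is, $AW=AA^d\,AW=AW\,AA^d$), and that $AA^\pi$ is nilpotent, so $(AA^\pi)^d=0$. Moreover $AW\cdot AA^\pi=AA^\pi\cdot AW=0$, since $AA^d\cdot AA^\pi=A^2A^d A^\pi=0$. Applying Lemma \ref{P+Q} to the splitting $AGF=AW+AA^\pi$ (with $P=AW$ and $Q=AA^\pi$, so that $PQ=0$), the second sum vanishes because $(AA^\pi)^d=0$, the factor $(AA^\pi)^\pi$ equals $I$, and every term of the first sum with index $i\ge 1$ dies because $AA^\pi\cdot(AW)^d=0$ (the latter following from $AA^\pi\cdot AW=0$ and the fact that $(AW)^d$ is a polynomial in $AW$ with zero constant term). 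Only the $i=0$ term survives, giving $(AGF)^d=(AW)^d$.

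The routine parts are the block multiplication verifying $M=FAG$ and the elementary idempotent identities $A^2A^dA^\pi=0$, $CA^dAA^d=CA^d$, and the like. The only genuinely delicate point is organising the decomposition $AGF=AW+AA^\pi$ so that the Hartwig formula of Lemma \ref{P+Q} collapses to the single surviving term $(AW)^d$; this rests entirely on the nilpotency of $AA^\pi$ and on the two‑sided annihilation between the ``core'' part $AW$ and the nilpotent part $AA^\pi$.
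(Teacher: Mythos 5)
Your proof is correct, but it is genuinely different from what the paper does: the paper gives no proof of Lemma \ref{miaoDrazin} at all, it simply quotes the result from \cite{Miao}. Your argument therefore supplies a self-contained derivation where the paper relies on an external reference. The steps all check out: the hypotheses $CA^\pi=0$, $A^\pi B=0$, $Z=0$ give $C=CAA^d$, $B=AA^dB$, $D=CA^dB$, whence $M=FAG$ with $F=\left[\begin{smallmatrix}I\\ CA^d\end{smallmatrix}\right]$ and $G=\left[\begin{smallmatrix}I & A^dB\end{smallmatrix}\right]$; Cline's formula (valid for rectangular factors whose two products are square) transfers the problem to $AGF=A+AA^dBCA^d$; and the splitting $AGF=AW+AA^\pi$, with $AW\cdot AA^\pi=AA^\pi\cdot AW=0$ and $AA^\pi$ nilpotent, collapses Lemma \ref{P+Q} to the single term $(AW)^d$, your polynomial argument for $AA^\pi(AW)^d=0$ being exactly the device the paper itself uses in the proof of Lemma \ref{(eaf)^{d}=ea^{d}f} (alternatively, $AA^\pi(AW)^d=AA^\pi\, AW\,((AW)^d)^2=0$ is immediate, and since both products of $AW$ and $AA^\pi$ vanish one could also invoke Drazin's classical additivity $(P+Q)^d=P^d+Q^d$ for $PQ=QP=0$). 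The one ingredient not available inside the paper is Cline's formula $(PQ)^d=P\bigl((QP)^d\bigr)^2Q$; it is classical, but strictly speaking it needs its own citation. What each approach buys: the paper's bare citation keeps Section 4 short, while your proof makes the lemma self-contained and explains where the otherwise mysterious factor $[(AW)^d]^2A$ in the formula comes from.
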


Under conditions $BZ=0$, $ZC=0$ and $A^\pi BC=0$, we obtain the
following expression for $M^d$ applying Corollary
\ref{corPQRS-3.7} or Theorem \ref{pqrssum}.

\begin{corollary}\label{corPQRS-3.9} If $BZ=0$, $ZC=0$ and $A^\pi BC=0$, then
$$M^d=P^d+\sum_{k=0}^{g-1}(P^{d})^{k+2}\left[\begin{array}{cc}
                   0& 0 \\
                   CA^\pi & 0
                 \end{array}\right]T^{k}T^{\pi}+T^{d},$$
                 where $W=AA^d+A^dBCA^d$, $$P=\left[\begin{array}{cc}
                   A^2A^d & AA^dB \\
                   CAA^d & CA^dB
                 \end{array}\right],\qquad
                 P^{d}=\left[\begin{array}{c}
                   I \\
                   CA^{d}
                 \end{array}\right]
                 [(AW)^d]^2A
                 \left[\begin{array}{cc}
                   I & A^{d}B
                 \end{array}\right],$$
                 $$T=\left[\begin{array}{cc}
                   AA^\pi & A^\pi B \\
                   0 & Z
                 \end{array}\right]+P^{\pi}\left[\begin{array}{cc}
                   A^2A^d & AA^dB \\
                   C & CA^dB
                 \end{array}\right],$$
 \begin{align*}
   T^{d}=&P^\pi\sum_{j=0}^{n-1}P^j\left[\begin{array}{cc}
                   0 & 0 \\
                   0 & (Z^d)^{j+1}
                 \end{array}\right],
 \end{align*}
for $\ind(T)=g$ and $\ind(P)=n$.
\end{corollary}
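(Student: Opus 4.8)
The plan is to exhibit $M$ as a pseudo-block sum $M=P+Q+R+S$ and then invoke Corollary \ref{corPQRS-3.7}. Keeping the matrix $P$ already named in the statement, I would set
$$Q=\begin{bmatrix}0&0\\0&Z\end{bmatrix},\qquad R=\begin{bmatrix}0&0\\CA^\pi&0\end{bmatrix},\qquad S=\begin{bmatrix}AA^\pi&A^\pi B\\0&0\end{bmatrix}.$$
Using the splittings $A=A^2A^d+AA^\pi$, $B=AA^dB+A^\pi B$, $C=CAA^d+CA^\pi$ and $D=CA^dB+Z$, a single block addition confirms $M=P+Q+R+S$. At this stage I would record the elementary identities $A^dA^\pi=0$ and $AA^dA^\pi=0$, together with the fact that $AA^\pi$ is nilpotent of index at most $\ind(A)$; these are the only facts about $A$ needed below.

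First I would check the hypotheses of Corollary \ref{corPQRS-3.7}. Since $R^2=0$ we get $R^d=0$, and since $AA^\pi$ is nilpotent the matrix $S$ is nilpotent, so $S^d=0$. The eight vanishing products then split cleanly according to which hypothesis each uses: $PQ=SQ=0$ come from $BZ=0$; $QP=QR=0$ come from $ZC=0$; $SP=SR=0$ come from $A^\pi BC=0$; and $PS=RP=0$ follow from the recorded Drazin identities. Each is a single $2\times2$ block product that collapses immediately, so this step is lengthy but mechanical.

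With the hypotheses verified, Corollary \ref{corPQRS-3.7} gives $M^d=P^d+X+T^d$, and identifying $S+Q=\left[\begin{smallmatrix}AA^\pi&A^\pi B\\0&Z\end{smallmatrix}\right]$ and $R+P=\left[\begin{smallmatrix}A^2A^d&AA^dB\\C&CA^dB\end{smallmatrix}\right]$ reproduces the stated $T$. To reduce $X$ and $T^d$ to the displayed forms I would use the block shapes $Q^d=\left[\begin{smallmatrix}0&0\\0&Z^d\end{smallmatrix}\right]$ and the fact that $S^h$ has zero bottom row for $h\ge1$: consequently $(Q^d)^{h+j+1}S^h=0$ for $h\ge1$, so every inner $h$-sum collapses to its $h=0$ term $\left[\begin{smallmatrix}0&0\\0&(Z^d)^{j+1}\end{smallmatrix}\right]$. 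Because $R^2=0$ the $k$-sum $\sum_{k=0}^{j}P^kR^{j-k}$ reduces to $P^j+P^{j-1}R$, and since $R\left[\begin{smallmatrix}0&0\\0&(Z^d)^{j+1}\end{smallmatrix}\right]=0$ only the $P^j$ term survives; this yields exactly $T^d=P^\pi\sum_{j=0}^{n-1}P^j\left[\begin{smallmatrix}0&0\\0&(Z^d)^{j+1}\end{smallmatrix}\right]$. The same $R^2=0$ collapse annihilates the second summand of $X$, leaving $X=\sum_{k=0}^{g-1}(P^d)^{k+2}R\,T^kT^\pi$ as stated.

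It remains to identify $P^d$, which is where I expect the real work to be. I would apply Lemma \ref{miaoDrazin} to $P$ regarded as a block matrix with corner $\tilde A=A^2A^d$ and off-diagonal blocks $\tilde B=AA^dB$, $\tilde C=CAA^d$, $\tilde D=CA^dB$. The crucial computation is $(A^2A^d)^d=A^d$, from which the inner annihilator equals $A^\pi$; then $\tilde C\tilde A^\pi=CAA^dA^\pi=0$, $\tilde A^\pi\tilde B=A^\pi AA^dB=0$, and the inner Schur complement $\tilde D-\tilde C\tilde A^d\tilde B=CA^dB-CA^dB=0$, so the hypotheses of the lemma hold. A short calculation gives $\tilde C\tilde A^d=CA^d$, $\tilde A^d\tilde B=A^dB$, the inner matrix $\tilde A\tilde A^d+\tilde A^d\tilde B\tilde C\tilde A^d=AA^d+A^dBCA^d=W$, and $\tilde A\tilde W=AW$. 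The lemma then produces $P^d=\left[\begin{smallmatrix}I\\CA^d\end{smallmatrix}\right][(AW)^d]^2A^2A^d\left[\begin{smallmatrix}I&A^dB\end{smallmatrix}\right]$, and the final step replaces $A^2A^d$ by $A$: from $(AW)AA^\pi=0$ one gets $[(AW)^d]^2AA^\pi=[(AW)^d]^3(AW)AA^\pi=0$, hence $[(AW)^d]^2A^2A^d=[(AW)^d]^2A$, which is the stated $P^d$. The hard part is thus not conceptual but bookkeeping: tracking which hypothesis kills each product and pushing the Drazin identities for $A$ through the $2\times2$ blocks; the index parameters $g$ and $n$ are inherited from Corollary \ref{corPQRS-3.7} and reduce to the values $\ind(T)$ and $\ind(P)$ recorded in the statement.
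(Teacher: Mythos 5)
Your proof is correct, and it rests on the same two pillars as the paper's own proof --- Lemma \ref{miaoDrazin} to identify $P^d$ and Corollary \ref{corPQRS-3.7} to assemble $M^d$ --- but your decomposition is not the one the paper uses: the paper takes $Q=\left[\begin{smallmatrix} AA^\pi & 0\\ 0 & Z\end{smallmatrix}\right]$ and $S=\left[\begin{smallmatrix} 0 & A^\pi B\\ 0 & 0\end{smallmatrix}\right]$, i.e.\ it places the nilpotent part $AA^\pi$ of $A$ into $Q$, whereas you place it into $S$. The difference turns out to be immaterial: both splittings share the same $P$ and $R$, hence the same $S+Q$, $R+P$ and $T$, and both yield $Q^d=\left[\begin{smallmatrix} 0&0\\0&Z^d\end{smallmatrix}\right]$, so the final formulas coincide. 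What each buys: the paper's choice keeps $S^2=0$, so every $h$-sum in Corollary \ref{corPQRS-3.7} has at most two terms, at the price of needing Lemma \ref{triangle} (nilpotency of $AA^\pi$) to compute $Q^d$; your choice makes $Q^d$ immediate, at the price of an $S$ that is nilpotent only of index up to $\ind(A)+1$, which you correctly neutralize with the zero-bottom-row observation $(Q^d)^{h+j+1}S^h=0$ for $h\geq 1$. Your writeup also supplies two details the paper silently skips: that the blocks of $P$ reproduce exactly the original $W$ and $AW$ (so Lemma \ref{miaoDrazin} really gives the displayed $P^d$), and the passage $[(AW)^d]^2A^2A^d=[(AW)^d]^2A$ via $(AW)AA^\pi=0$. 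Two small caveats, neither a gap: your phrase ``the same $R^2=0$ collapse annihilates the second summand of $X$'' also uses $RQ^d=0$ for the $j=0$ term (which you had established in the preceding sentence), and replacing the index parameter $n$ of Corollary \ref{corPQRS-3.7} by $\ind(P)$ deserves the one-line remark that all terms with $j\geq\ind(P)$ vanish because $P^\pi P^j=0$ --- a point the paper glosses over as well.
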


\begin{proof} Suppose that $M=P+Q+R+S$, where $$P=\left[\begin{array}{cc}
                   A^2A^d & AA^dB \\
                   CAA^d & CA^dB
                 \end{array}\right],\quad
                 Q=\left[\begin{array}{cc}
                   AA^\pi & 0 \\
                   0 & Z
                 \end{array}\right],\quad
                 R=\left[\begin{array}{cc}
                   0& 0 \\
                   CA^\pi & 0
                 \end{array}\right],\quad
S=\left[\begin{array}{cc}
                   0 & A^\pi B \\
                   0 & 0
                 \end{array}\right].$$
Since $BZ=0=ZC=A^\pi BC$, we get $PQ=QP=0,PS=SQ=QR=RP=0$ and $SP=SR=0$.

If $a_p=A^2A^d$, $b_p=AA^dB$, $c_p=CAA^d$ and $d_p=CA^dB$, then $c_pa_p^\pi=CAA^dA^\pi=0$,
$a_p^\pi b_p=A^\pi AA^dB=0$ and $z_p=d_p-c_pa_pb_p=0$. Applying Lemma \ref{miaoDrazin}, we obtain
$$P^{d}=\left[\begin{array}{c}
                   I \\
                   CA^{d}
                 \end{array}\right]
                 [(AW)^d]^2A
                 \left[\begin{array}{cc}
                   I & A^{d}B
                 \end{array}\right].$$

Using Lemma \ref{triangle}, we observe that $R^d=S^d=0$ and
$$Q^d=\left[\begin{array}{cc}
                   0 & 0 \\
                   0 & Z^d
                 \end{array}\right].$$

By Corollary \ref{corPQRS-3.7}, we obtain the expression for the Drazin inverse of $M$:
\begin{equation*}
 M^{d}= P^{d}+X+T^{d},
\end{equation*}
where $$T=\left[\begin{array}{cc}
                   AA^\pi & A^\pi B \\
                   0 & Z
                 \end{array}\right]+P^{\pi}\left[\begin{array}{cc}
                   A^2A^d & AA^dB \\
                   C & CA^dB
                 \end{array}\right],$$
 \begin{align*}
 T^{d}=&P^\pi\sum_{j=0}^{n-1}P^j\left[\begin{array}{cc}
                   0 & 0 \\
                   0 & (Z^d)^{j+1}
                 \end{array}\right],\\
   X=&\sum_{k=0}^{g-1}(P^{d})^{k+2}\left[\begin{array}{cc}
                   0& 0 \\
                   CA^\pi & 0
                 \end{array}\right]T^{k}T^{\pi},\\
 \end{align*}
for $\ind(T)=g$ and $\ind(P)=n$.
\end{proof}

If we assume that $Z=0$ in Corollary \ref{corPQRS-3.9}, we obtain
the next result which parts (i) and (ii) are
\cite[Corollary 2.10 (i)]{MDjamc2} and \cite[Corollary
2.1(ii)]{MDijcmEx} for cases of Drazin inverses, respectively.

\begin{corollary} Let $W$, $P$ and $P^d$ be defined as in Corollary \ref{corPQRS-3.9}. If
\begin{itemize}
\item[\rm (i)] $Z=0$ and $A^\pi BC=0$, then
$$M^d=P^d+\sum_{k=0}^{g-1}(P^{d})^{k+2}\left[\begin{array}{cc}
                   0& 0 \\
                   CA^\pi & 0
                 \end{array}\right]\left[\begin{array}{cc}
                   AA^\pi & A^\pi B \\
                   0 & 0
                 \end{array}\right]^{k},$$
                 where $\ind(A)=g+1$. 

\item[\rm (ii)] $Z=0$ and $A^\pi B=0$, then
$$M^d=P^d+\sum_{k=0}^{g-1}(P^{d})^{k+2}\left[\begin{array}{cc}
                   0& 0 \\
                   CA^\pi A^k& 0
                 \end{array}\right],$$
                 where $\ind(A)=g$.
\end{itemize}
\end{corollary}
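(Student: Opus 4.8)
The plan is to derive the corollary as the specialization $Z=0$ of Corollary~\ref{corPQRS-3.9}. First I would verify that the hypotheses of Corollary~\ref{corPQRS-3.9} are met: when $Z=0$ we automatically have $BZ=0$ and $ZC=0$, while $A^\pi BC=0$ is assumed in part (i) and is an immediate consequence of $A^\pi B=0$ in part (ii). Hence Corollary~\ref{corPQRS-3.9} applies and gives $M^d=P^d+\sum_{k=0}^{g-1}(P^d)^{k+2}RT^kT^\pi+T^d$, where $R=\left[\begin{smallmatrix}0&0\\CA^\pi&0\end{smallmatrix}\right]$, $T=(S+Q)+P^\pi(R+P)$, and $P$, $P^d$ are as displayed there.

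Next I would simplify the three ingredients using $Z=0$. Since $Z^d=0$, the representation of $T^d$ in Corollary~\ref{corPQRS-3.9} collapses termwise to $T^d=0$, so the last summand disappears and $T^\pi=I-TT^d=I$. This already reduces the expression to $M^d=P^d+\sum_{k=0}^{g-1}(P^d)^{k+2}RT^k$, and the remaining task is to identify $RT^k$.

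The key reduction is $RT^k=R(S+Q)^k$. To establish it I would use the orthogonality relations carried by the pseudo-block decomposition $M=P+Q+R+S$. From $RP=0$ we get $RP^\pi=R$, and together with $R^2=0$ this gives $RP^\pi(R+P)=R(R+P)=0$; likewise $SP=QP=0$ give $(S+Q)P^\pi=S+Q$, while $SR=SP=QR=QP=0$ give $(S+Q)(R+P)=0$, hence $(S+Q)P^\pi(R+P)=0$. The relation $SR=0$ is precisely where $A^\pi BC=0$ enters, since $SR=\left[\begin{smallmatrix}A^\pi BCA^\pi&0\\0&0\end{smallmatrix}\right]$. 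These identities yield $RT=R(S+Q)$ and $(S+Q)T=(S+Q)^2$, and a one-line induction then gives $RT^k=R(S+Q)^k$. Substituting $S+Q=\left[\begin{smallmatrix}AA^\pi&A^\pi B\\0&0\end{smallmatrix}\right]$ produces the formula in part (i); for part (ii) the stronger hypothesis $A^\pi B=0$ forces $S=\left[\begin{smallmatrix}0&A^\pi B\\0&0\end{smallmatrix}\right]=0$, so $S+Q=Q=\left[\begin{smallmatrix}AA^\pi&0\\0&0\end{smallmatrix}\right]$ and $RQ^k=\left[\begin{smallmatrix}0&0\\CA^\pi A^k&0\end{smallmatrix}\right]$, which is the formula in part (ii).

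I expect the main obstacle to be the index bookkeeping rather than the algebra. The upper limit inherited from Corollary~\ref{corPQRS-3.9} is $g=\ind(T)$, and it must be translated into the stated bound in terms of $\ind(A)$. Since $RT^kT^\pi=R(S+Q)^k$ vanishes once $k$ reaches the nilpotency index, the task is to locate the last nonzero power of $R(S+Q)^k$. In part (ii) this is governed by $CA^\pi A^k$, which dies as soon as $A^kA^\pi=0$, i.e. at $k=\ind(A)$; in part (i) the additional off-diagonal block $CA^{k-1}A^\pi B$ survives one step longer, which is exactly what shifts the index relation between the two cases. Pinning down this count, and checking that the truncation of the sum discards only zero terms, is the delicate step.
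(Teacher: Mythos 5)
Your proposal follows exactly the paper's route: the paper gives no argument at all beyond ``set $Z=0$ in Corollary \ref{corPQRS-3.9}'', so your verification of the hypotheses, the collapse $T^d=0$ and hence $T^\pi=I$, and the identity $RT^k=R(S+Q)^k$ (via $RP^\pi=R$, $R^2=0$, $(S+Q)P^\pi=S+Q$ and $(S+Q)(R+P)=SR=0$, the last being where $A^\pi BC=0$ enters) supply precisely the details the paper leaves implicit, and that algebra is correct. The one loose end is the one you yourself flag, the translation of the limit $g=\ind(T)$ inherited from Corollary \ref{corPQRS-3.9} into the stated index relations. For part (ii) your observations already close it: every term with $k\geq\ind(A)$ vanishes because $CA^\pi A^k=0$, and every term with $k\geq\ind(T)$ vanishes because $RT^k=0$, so the two truncations of the sum agree. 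For part (i), however, your computation gives $R(S+Q)^k=\left[\begin{smallmatrix}0&0\\ CA^kA^\pi & CA^{k-1}A^\pi B\end{smallmatrix}\right]$, whose second block need not die until $k=\ind(A)+1$; since the paper's part (i) limit $g-1=\ind(A)-2$ is \emph{shorter} than the part (ii) limit $\ind(A)-1$, not longer, the claimed truncation in (i) would additionally require $(P^d)^{k+2}R(S+Q)^k=0$ for $k\geq\ind(A)-1$ (equivalently, some bound like $\ind(T)\leq\ind(A)-1$), which neither you nor the paper establishes. So your proof is complete for part (ii) and for part (i) up to that index bookkeeping, which is a defect of the paper's own (nonexistent) proof as much as of yours; the safe formulation of (i) keeps the upper limit $\ind(T)-1$.
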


Now, we consider the formula for the Drazin inverse of $M$ in the
case that $BZ=0$, $ZCA^d=0$, $A^\pi BC=0$ and $BCA^\pi=0$.

\begin{corollary}\label{corPQRS-3.10} If $BZ=0$, $ZCA^d=0$, $A^\pi BC=0$ and $BCA^\pi=0$, then
$$M^d=\left[\begin{array}{cc}
                   0 & 0 \\
                   0 & Z^d
                 \end{array}\right]+Q^d+\sum_{k=0}^{g-1}\left[\begin{array}{cc}
                   0& 0 \\
                   (Z^d)^{k+2}C & 0
                 \end{array}\right]
                 \left[\begin{array}{cc}
                   A & B \\
                   Z^\pi C& Z^\pi D
                 \end{array}\right]^{k}(I-\left[\begin{array}{cc}
                   A & B \\
                   Z^\pi C& Z^\pi D
                 \end{array}\right]Q^d)^{\pi},$$
                 where $\ind(\left[\begin{array}{cc}
                   A & B \\
                   Z^\pi C& Z^\pi D
                 \end{array}\right])=g$, $W=AA^d+A^dBCA^d$ and $$Q^{d}=\left[\begin{array}{c}
                   I \\
                   CA^{d}
                 \end{array}\right]
                 [(AW)^d]^2A
                 \left[\begin{array}{cc}
                   I & A^{d}B
                 \end{array}\right].$$
\end{corollary}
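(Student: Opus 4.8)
The plan is to present $M$ as a pseudo-block matrix and apply Corollary \ref{corPQRS-3.7}. I would set $M=P+Q+R+S$ with
\[
P=\left[\begin{smallmatrix}0&0\\0&Z\end{smallmatrix}\right],\quad
Q=\left[\begin{smallmatrix}A^2A^d&AA^dB\\CAA^d&CA^dB\end{smallmatrix}\right],\quad
R=\left[\begin{smallmatrix}0&0\\CA^\pi&0\end{smallmatrix}\right],\quad
S=\left[\begin{smallmatrix}AA^\pi&A^\pi B\\0&0\end{smallmatrix}\right],
\]
so that $P+Q+R+S=M$ via the splittings $A=A^2A^d+AA^\pi$, $B=AA^dB+A^\pi B$, $C=CAA^d+CA^\pi$, $D=CA^dB+Z$. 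The one genuinely delicate design choice is that the nilpotent piece $AA^\pi$ of the $(1,1)$ block is absorbed into $S$ rather than kept in the diagonal summand $P$: the latter (the natural analogue of Corollary \ref{corPQRS-3.9}) would leave $PS=\left[\begin{smallmatrix}0&AA^\pi B\\0&0\end{smallmatrix}\right]\neq0$ under the present weaker hypotheses, whereas the displayed placement makes every required product vanish.

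Next I would check that $(P,Q,R,S)$ meets the hypotheses of Corollary \ref{corPQRS-3.7}, i.e. $PQ=QP=0$, $PS=SQ=QR=RP=0$, $SP=SR=0$ and $R^d=S^d=0$. This is block multiplication in which each standing assumption has a precise task: $BZ=0$ yields $QP=0$ (through $AA^dBZ=CA^dBZ=0$) and $SP=0$ (through $A^\pi BZ=0$); $ZCA^d=0$ yields $PQ=0$ (through its consequence $ZCAA^d=ZCA^dA=0$); $A^\pi BC=0$ yields $SQ=0$ and $SR=0$; and $BCA^\pi=0$ yields $QR=0$, while $PS=0$ and $RP=0$ are automatic from the zero block-rows. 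Nilpotency of $R$ (indeed $R^2=0$) and of $S$ (block upper triangular with the nilpotent $AA^\pi$ on the diagonal) gives $R^d=S^d=0$. I would also record the two identities $ZCAA^d=0$ and $Z^dCA^d=0$, which drive every later simplification.

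Corollary \ref{corPQRS-3.7} then delivers $M^d=P^d+X+T^d$, and most ingredients are immediate: $P^d=\left[\begin{smallmatrix}0&0\\0&Z^d\end{smallmatrix}\right]$, $R^d=S^d=0$, and $Q^d$ is obtained exactly as in Corollary \ref{corPQRS-3.9} by applying Lemma \ref{miaoDrazin} to $Q$ (its Schur complement $CA^dB-CAA^d(A^2A^d)^dAA^dB$ vanishes), producing the stated $Q^d$. For the middle matrix I would compute $T=(S+Q)+P^\pi(R+P)$; since $S+Q=\left[\begin{smallmatrix}A&B\\CAA^d&CA^dB\end{smallmatrix}\right]$ and $P^\pi(R+P)=\left[\begin{smallmatrix}0&0\\Z^\pi CA^\pi&Z^\pi Z\end{smallmatrix}\right]$, the relations $CAA^d+Z^\pi CA^\pi=Z^\pi C$ and $CA^dB+Z^\pi Z=Z^\pi D$ (both consequences of $ZCA^d=0$) collapse $T$ to precisely $\left[\begin{smallmatrix}A&B\\Z^\pi C&Z^\pi D\end{smallmatrix}\right]$.

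The heart of the argument — and the step I expect to carry the real weight — is showing that the generic formula of Corollary \ref{corPQRS-3.7}, a priori a multiple sum over the indices $g$, $l$, $n$, collapses to the clean three-term expression in the statement. This rests on two ``hidden'' orthogonalities obtained from short core-nilpotent computations. First, $RQ=0$, whence $RQ^d=0$, so the correction term $-P^dRT^d$ inside $X$ vanishes identically. Second, $VQ=0$ for $V:=P^\pi(R+P)$ (again from $RQ=0$ and $PQ=0$) together with $Q^dS=0$; feeding these into Lemma \ref{P+Q} applied to the orthogonal splitting $T=U+V$ with $U:=S+Q$ and $UV=0$ makes the expansion $T^d=\sum_i V^i(U^d)^{i+1}$ telescope all the way down to $T^d=Q^d$. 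What then survives is $X=\sum_{k=0}^{g-1}(P^d)^{k+2}RT^kT^\pi$, and since $(P^d)^{k+2}R=\left[\begin{smallmatrix}0&0\\(Z^d)^{k+2}CA^\pi&0\end{smallmatrix}\right]=\left[\begin{smallmatrix}0&0\\(Z^d)^{k+2}C&0\end{smallmatrix}\right]$ (using $(Z^d)^{k+2}CAA^d=0$) and $T^\pi=I-TT^d=I-\left[\begin{smallmatrix}A&B\\Z^\pi C&Z^\pi D\end{smallmatrix}\right]Q^d$, assembling $P^d+X+T^d$ yields exactly the asserted identity.
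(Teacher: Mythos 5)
Your proposal is correct and takes essentially the same route as the paper: the identical decomposition $M=P+Q+R+S$ with $P=\left[\begin{smallmatrix}0&0\\0&Z\end{smallmatrix}\right]$, $Q=\left[\begin{smallmatrix}A^2A^d&AA^dB\\CAA^d&CA^dB\end{smallmatrix}\right]$, $R=\left[\begin{smallmatrix}0&0\\CA^\pi&0\end{smallmatrix}\right]$, $S=\left[\begin{smallmatrix}AA^\pi&A^\pi B\\0&0\end{smallmatrix}\right]$, followed by an application of Corollary \ref{corPQRS-3.7} together with Lemma \ref{miaoDrazin}, and the same simplifications $T=\left[\begin{smallmatrix}A&B\\Z^\pi C&Z^\pi D\end{smallmatrix}\right]$, $T^d=Q^d$, and $X=\sum_{k=0}^{g-1}(P^d)^{k+2}RT^kT^\pi$. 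The only difference is that you spell out details the paper leaves implicit, namely the verification of the hypotheses of Corollary \ref{corPQRS-3.7}, the vanishing of the correction term via $RQ^d=0$, and the collapse of $T^d$ to $Q^d$.
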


\begin{proof} Let $M=P+Q+R+S$, where
$$P=\left[\begin{array}{cc}
                   0 & 0 \\
                   0 & Z
                 \end{array}\right],\quad
                 Q=\left[\begin{array}{cc}
                   A^2A^d & AA^dB \\
                   CAA^d & CA^dB
                 \end{array}\right],\quad
                 R=\left[\begin{array}{cc}
                   0& 0 \\
                   CA^\pi & 0
                 \end{array}\right],\quad
S=\left[\begin{array}{cc}
                   AA^\pi & A^\pi B \\
                   0 & 0
                 \end{array}\right].$$
We check that $R^d=S^d=0$, $$P^d=\left[\begin{array}{cc}
                   0 & 0 \\
                   0 & Z^d
                 \end{array}\right]\quad{\rm and}\quad Q^{d}=\left[\begin{array}{c}
                   I \\
                   CA^{d}
                 \end{array}\right]
                 [(AW)^d]^2A
                 \left[\begin{array}{cc}
                   I & A^{d}B
                 \end{array}\right].$$
Applying Corollary \ref{corPQRS-3.7}, we get
\begin{equation*}
 M^{d}= P^{d}+X+T^{d},
\end{equation*}
where $$T=\left[\begin{array}{cc}
                   A & B \\
                   CAA^d & CA^dB
                 \end{array}\right]+\left[\begin{array}{cc}
                   I & 0 \\
                   0 & Z^\pi
                 \end{array}\right]
                 \left[\begin{array}{cc}
                   0&0\\
                   CA^\pi& Z
                 \end{array}\right]
                 =\left[\begin{array}{cc}
                   A & B \\
                   Z^\pi C& Z^\pi D
                 \end{array}\right],$$
$$T^{d}=Q^d,\qquad X=\sum_{k=0}^{g-1}\left[\begin{array}{cc}
                   0& 0 \\
                   (Z^d)^{k+2}C & 0
                 \end{array}\right]T^{k}T^{\pi},$$
for $\ind(T)=g$.
\end{proof}

The following result extends  \cite[Corollary 4.2]{HLW}, which includes conditions
$CA^\pi A=0$, $CA^\pi B=0$ and the generalized Schur complement $Z=0$.

\begin{corollary}\label{corPQRS-3.11} If $BZ=0$, $ZC=0$, $CA^\pi A=0$ and $CA^\pi B=0$, then
\begin{eqnarray*}M^d&=&\left[\begin{array}{cc}
                   0 & 0 \\
                   0 & Z^d
                 \end{array}\right]+Q^d+(Q^{d})^{2}\left[\begin{array}{cc}
                   0& 0 \\
                   CA^\pi & 0
                 \end{array}\right]\\&+&
                 \sum_{j=1}^{n-1}\left[\begin{array}{cc}
                   0 & A^{j-1}A^\pi B \\
                   0 & 0
                 \end{array}\right]
                 \big((Q^{d})^{j+1}+(Q^{d})^{j+2}\left[\begin{array}{cc}
                   0& 0 \\
                   CA^\pi & 0
                 \end{array}\right]\big),\end{eqnarray*}
                 where $\ind(A)=n-1$, $W=AA^d+A^dBCA^d$ and $$Q^{d}=\left[\begin{array}{c}
                   I \\
                   CA^{d}
                 \end{array}\right]
                 [(AW)^d]^2A
                 \left[\begin{array}{cc}
                   I & A^{d}B
                 \end{array}\right].$$
\end{corollary}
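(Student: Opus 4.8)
The plan is to realize $M$ as a pseudo-block matrix $M=P+Q+R+S$ for which Corollary \ref{corPQRS-3.7} applies, and then to collapse the resulting double sum using the four hypotheses. The shape of the claimed formula is the guide: the summand $\left[\begin{smallmatrix}0 & A^{j-1}A^\pi B\\0&0\end{smallmatrix}\right]$ is the off-diagonal block of a $j$-th power, whereas $\left[\begin{smallmatrix}0&0\\CA^\pi&0\end{smallmatrix}\right]$ occurs only linearly. I would therefore place $AA^\pi$ and $A^\pi B$ into a top block-row $R$ and keep $CA^\pi$ in $S$, setting
\begin{equation*}
P=\begin{bmatrix}0&0\\0&Z\end{bmatrix},\quad Q=\begin{bmatrix}A^2A^d&AA^dB\\CAA^d&CA^dB\end{bmatrix},\quad R=\begin{bmatrix}AA^\pi&A^\pi B\\0&0\end{bmatrix},\quad S=\begin{bmatrix}0&0\\CA^\pi&0\end{bmatrix}.
\end{equation*}
This choice differs from the one used for Corollary \ref{corPQRS-3.10}; it is arranged so that precisely the hypotheses $CA^\pi A=0$ and $CA^\pi B=0$ carry the argument.

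First I would verify that $(P,Q,R,S)$ is a pseudo-block matrix obeying the extra relations $SP=SR=0$ required by Corollary \ref{corPQRS-3.7}. The products $PQ$, $QP$, $RP$, $PS$ vanish from $ZC=0$ and $BZ=0$, and $SP=0$ is immediate; the decisive ones $SQ=0$, $QR=0$, $SR=0$ come from $CA^\pi A=0$, $CA^\pi B=0$ and $A^\pi A^d=0$ (for instance $CA^\pi AA^\pi=(CA^\pi A)A^\pi=0$). Both $R$ and $S$ are nilpotent, so $R^d=S^d=0$: indeed $S^2=0$, while $R^k=\left[\begin{smallmatrix}A^kA^\pi & A^{k-1}A^\pi B\\0&0\end{smallmatrix}\right]$ dies once $k>\ind(A)$. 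Here $P^d=\left[\begin{smallmatrix}0&0\\0&Z^d\end{smallmatrix}\right]$ is immediate, and since $Q$ is exactly the block matrix handled in the proof of Corollary \ref{corPQRS-3.9}, whose generalized Schur complement vanishes, Lemma \ref{miaoDrazin} produces the stated $Q^d$ with $W=AA^d+A^dBCA^d$.

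Next I would apply Corollary \ref{corPQRS-3.7} to write $M^d=P^d+X+T^d$ and simplify. The correction $X$ vanishes identically, since every summand carries a left factor $(P^d)^{\bullet}R^{\bullet}$ and $P^dR=0=P^dR^{j+1}$ (because $P^d$ is supported in the $(2,2)$ slot while each power of $R$ has zero second block-row). For $T^d=P^\pi\sum_{j}\sum_{k}P^kR^{j-k}\sum_h(Q^d)^{h+j+1}S^h$ I would reduce in stages: $S^2=0$ truncates the $h$-sum to $h\in\{0,1\}$, giving the factor $(Q^d)^{j+1}+(Q^d)^{j+2}S$; since $PR=0$ and $R^{j-k}$ has zero second block-row, the mixed terms with $0<k<j$ drop out, leaving $\sum_kP^kR^{j-k}=P^j+R^j$; finally $P^\pi P^j=\left[\begin{smallmatrix}0&0\\0&Z^\pi Z^j\end{smallmatrix}\right]$ is killed on the right by $(Q^d)^{j+1}$ for $j\ge1$ (as $Z^jC=0$), while $P^\pi R^j=R^j$, and for $j=0$ the relation $Z^\pi C=C$ gives $P^\pi Q^d=Q^d$.

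The one genuinely nontrivial step, and the main obstacle, is discarding the diagonal block $A^jA^\pi$ of $R^j$, i.e. showing $\left[\begin{smallmatrix}A^jA^\pi & 0\\0&0\end{smallmatrix}\right](Q^d)^{m}=0$ for $m\ge1$, so that $R^j(Q^d)^{m}=\left[\begin{smallmatrix}0&A^{j-1}A^\pi B\\0&0\end{smallmatrix}\right](Q^d)^{m}$. The top block-row of every power of $Q^d$ factors on the left through $G:=[(AW)^d]^2A$, so this reduces to $A^jA^\pi G=A^j(A^\pi G)=0$, that is, to the identity $A^\pi[(AW)^d]^2A=0$. I would derive it from $A^\pi(AW)=A^\pi AA^d(A+BCA^d)=0$ (which holds because $(I-AA^d)AA^d=0$): since $(AW)^d=(AW)[(AW)^d]^2$, this forces $A^\pi(AW)^d=0$ and hence $A^\pi G=0$. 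With this identity the $j$-sum assumes the advertised form, the $j=0$ term contributes $Q^d+(Q^d)^2\left[\begin{smallmatrix}0&0\\CA^\pi&0\end{smallmatrix}\right]$, the truncation records $\ind(A)=n-1$, and adding $P^d=\left[\begin{smallmatrix}0&0\\0&Z^d\end{smallmatrix}\right]$ yields the claimed formula.
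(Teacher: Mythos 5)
Your proposal is correct and follows essentially the same route as the paper: the decomposition $P=\left[\begin{smallmatrix}0&0\\0&Z\end{smallmatrix}\right]$, $Q=\left[\begin{smallmatrix}A^2A^d&AA^dB\\CAA^d&CA^dB\end{smallmatrix}\right]$, $R=\left[\begin{smallmatrix}AA^\pi&A^\pi B\\0&0\end{smallmatrix}\right]$, $S=\left[\begin{smallmatrix}0&0\\CA^\pi&0\end{smallmatrix}\right]$ is exactly the one the paper uses for this corollary (the paper's proof merely states it and says ``verify as in Corollary \ref{corPQRS-3.10}''), followed by the same application of Corollary \ref{corPQRS-3.7} and Lemma \ref{miaoDrazin}. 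Your writeup simply supplies the verification details the paper leaves implicit, including the key identity $A^\pi[(AW)^d]^2A=0$, and these details check out.
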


\begin{proof} If we write $M=P+Q+R+S$, for $$P=\left[\begin{array}{cc}
                   0 & 0 \\
                   0 & Z
                 \end{array}\right],\quad
                 Q=\left[\begin{array}{cc}
                   A^2A^d & AA^dB \\
                   CAA^d & CA^dB
                 \end{array}\right],\quad
                 R=\left[\begin{array}{cc}
                   AA^\pi& A^\pi B \\
                   0& 0
                 \end{array}\right],\quad
S=\left[\begin{array}{cc}
                   0 & 0 \\
                   CA^\pi & 0
                 \end{array}\right],$$
we verify this result as Corollary \ref{corPQRS-3.10}.
\end{proof}

As a consequence of Corollary \ref{corPQRS-3.10} and Corollary
\ref{corPQRS-3.11}, we get the next result which parts (i) and
(ii) are \cite[Corollary 2.2 (ii)]{MDijcmEx}
and \cite[Corollary 2.3 (ii)]{MDjrepgdAMC1} for cases of Drazin inverses, respectively.

\begin{corollary} Let $W$ and $Q^{d}$ be defined as in Corollary \ref{corPQRS-3.10}. If
\begin{itemize}
\item[\rm (i)] $Z=0$ and $CA^\pi=0$, then
$$M^d=Q^d+\sum_{j=1}^{n-1}\left[\begin{array}{cc}
                   0 & A^{j-1}A^\pi B \\
                   0 & 0
                 \end{array}\right]
                (Q^{d})^{j+1},$$
                 where $\ind(A)=n-1$.

\item[\rm (ii)] $Z=0$, $A^\pi B=0$ and $BCA^\pi=0$, then
$M^d=Q^d$.
\end{itemize}
\end{corollary}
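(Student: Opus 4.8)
The plan is to derive both parts as immediate specializations, to the case $Z=0$, of the two corollaries already proved, namely Corollary~\ref{corPQRS-3.10} and Corollary~\ref{corPQRS-3.11}. In each part the strategy is the same: first confirm that the (weaker-looking) hypotheses stated here force all four running hypotheses of the appropriate source corollary, and then substitute $Z=0$, so that $Z^d=0$, into the displayed formula and watch the surplus terms collapse.

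For part~(i) I would invoke Corollary~\ref{corPQRS-3.11}, whose hypotheses read $BZ=0$, $ZC=0$, $CA^\pi A=0$ and $CA^\pi B=0$. The assumption $Z=0$ supplies $BZ=0$ and $ZC=0$ at once, while $CA^\pi=0$ supplies both $CA^\pi A=0$ and $CA^\pi B=0$. Inserting $Z^d=0$ deletes the leading block $\left[\begin{smallmatrix}0&0\\0&Z^d\end{smallmatrix}\right]$, and inserting $CA^\pi=0$ makes $\left[\begin{smallmatrix}0&0\\CA^\pi&0\end{smallmatrix}\right]$ the zero matrix, which annihilates the term $(Q^d)^2\left[\begin{smallmatrix}0&0\\CA^\pi&0\end{smallmatrix}\right]$ together with the inner correction $(Q^d)^{j+2}\left[\begin{smallmatrix}0&0\\CA^\pi&0\end{smallmatrix}\right]$ appearing inside the sum. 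What remains is precisely $Q^d+\sum_{j=1}^{n-1}\left[\begin{smallmatrix}0&A^{j-1}A^\pi B\\0&0\end{smallmatrix}\right](Q^d)^{j+1}$, and the index relation $\ind(A)=n-1$ carries over unchanged.

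For part~(ii) I would instead apply Corollary~\ref{corPQRS-3.10}, whose hypotheses read $BZ=0$, $ZCA^d=0$, $A^\pi BC=0$ and $BCA^\pi=0$. Once more $Z=0$ yields $BZ=0$ and $ZCA^d=0$; the hypothesis $A^\pi B=0$ gives $A^\pi BC=0$; and $BCA^\pi=0$ is assumed outright. Since $Z=0$ forces $Z^d=0$, the leading block $\left[\begin{smallmatrix}0&0\\0&Z^d\end{smallmatrix}\right]$ disappears, and every summand in the remaining sum $\sum_{k=0}^{g-1}\left[\begin{smallmatrix}0&0\\(Z^d)^{k+2}C&0\end{smallmatrix}\right]\cdots$ carries the vanishing factor $(Z^d)^{k+2}$. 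The formula therefore reduces to $M^d=Q^d$, as asserted.

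I expect no genuine obstacle here: the cancellations are forced by the single substitution $Z=0$ (together with $CA^\pi=0$ in part~(i)). The only point deserving a moment's care is the bookkeeping that matches the present hypotheses to the four conditions of each source corollary; once that is in place, the simplifications are purely mechanical.
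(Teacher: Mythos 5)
Your proposal is correct and is exactly the argument the paper intends: the paper states this result as an immediate consequence of Corollaries \ref{corPQRS-3.10} and \ref{corPQRS-3.11} without writing out the specialization, and you have supplied it faithfully, including the right pairing of hypotheses (part (i) from Corollary \ref{corPQRS-3.11} since $CA^\pi=0$ gives $CA^\pi A=CA^\pi B=0$, and part (ii) from Corollary \ref{corPQRS-3.10} since $A^\pi B=0$ gives $A^\pi BC=0$). The cancellations you describe ($Z^d=0$ killing the leading block and the $(Z^d)^{k+2}$ sum, $CA^\pi=0$ killing the correction terms) are precisely what reduces the two source formulas to the stated ones.
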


The assumptions $BC=0$ and $BD=0$ imply the following result which
recovers \cite[Theorem 3.2 (i)]{MZCannalsFunc} and
\cite[Theorem 5.3]{DjS} for the case of Drazin inverses.

\begin{corollary} If $BC=0$ and $BD=0$, then
\begin{eqnarray*}M^d&=&\left[\begin{array}{cc}
                   0 & 0 \\
                   -D^dCA^d & D^d-D^dC(A^d)^2B
                 \end{array}\right]+T^d\\&+&\sum_{k=0}^{g-1}\left[\begin{array}{cc}
                   0& 0 \\
                   (D^d)^{k+2}C & 0
                 \end{array}\right]
                 \left[\begin{array}{cc}
                   A & B \\
                   D^\pi C& D^\pi D
                 \end{array}\right]^{k}(I-\left[\begin{array}{cc}
                   A & B \\
                   D^\pi C& D^\pi D
                 \end{array}\right]T^d)^{\pi},\end{eqnarray*}
                 where $\ind(\left[\begin{array}{cc}
                   A & B \\
                   D^\pi C& D^\pi D
                 \end{array}\right])=g$ and $$T^{d}=\left[\begin{array}{cc}
                   A^{d} & (A^{d})^2B \\
                   0     &  0
                 \end{array}\right]
                 +\sum_{j=1}^{n-1}
                 \left[\begin{array}{cc}
                   0 & 0\\ D^\pi D^{j-1}C(A^{d})^{j+1} & D^\pi D^{j-1}C(A^{d})^{j+2}B
                 \end{array}\right].$$
\end{corollary}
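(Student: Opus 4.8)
The plan is to realise $M$ as a pseudo-block matrix $M=P+Q+R+S$ and then simply read off $M^{d}$ from Corollary~\ref{corPQRS-3.7}, exactly along the lines of the proof of Corollary~\ref{corPQRS-3.10}. The decomposition I would use is
\[
P=\left[\begin{smallmatrix}0&0\\0&D\end{smallmatrix}\right],\quad
Q=\left[\begin{smallmatrix}A&B\\0&0\end{smallmatrix}\right],\quad
R=\left[\begin{smallmatrix}0&0\\C&0\end{smallmatrix}\right],\quad
S=0,
\]
which visibly sums to $M$. First I would verify the pseudo-block hypotheses of Corollary~\ref{corPQRS-3.7}. The relations $PQ=RP=0$, together with every condition involving $S$, are immediate, and $R^{2}=0$, $S=0$ give $R^{d}=S^{d}=0$. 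The two remaining identities are precisely where the hypotheses are used: $QP=\left[\begin{smallmatrix}0&BD\\0&0\end{smallmatrix}\right]=0$ uses $BD=0$, and $QR=\left[\begin{smallmatrix}BC&0\\0&0\end{smallmatrix}\right]=0$ uses $BC=0$. Hence $(P,Q,R,S)$ is a genuine pseudo-block decomposition meeting all assumptions of Corollary~\ref{corPQRS-3.7}.

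Next I would assemble the three ingredients that feed the formula. Both $P$ and $Q$ are block triangular, so Lemma~\ref{triangle} yields at once $P^{d}=\left[\begin{smallmatrix}0&0\\0&D^{d}\end{smallmatrix}\right]$ and $Q^{d}=\left[\begin{smallmatrix}A^{d}&(A^{d})^{2}B\\0&0\end{smallmatrix}\right]$ (the off-diagonal entry of $Q^{d}$ is the $X$-term of Lemma~\ref{triangle}, which collapses to $(A^{d})^{2}B$ because the lower diagonal block of $Q$ is $0$); I would also record $P^{\pi}=\left[\begin{smallmatrix}I&0\\0&D^{\pi}\end{smallmatrix}\right]$. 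Then $T=(S+Q)+P^{\pi}(R+P)=Q+P^{\pi}\left[\begin{smallmatrix}0&0\\C&D\end{smallmatrix}\right]=\left[\begin{smallmatrix}A&B\\D^{\pi}C&D^{\pi}D\end{smallmatrix}\right]$, matching the matrix in the statement.

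Finally I would substitute into the expressions of Corollary~\ref{corPQRS-3.7} and simplify. Because $S=0$, every inner sum $\sum_{h}(Q^{d})^{h+j+1}S^{h}$ reduces to its $h=0$ term $(Q^{d})^{j+1}$; because $R^{2}=0$, the sum $\sum_{k=0}^{j}P^{k}R^{j-k}$ keeps only $k=j$ and $k=j-1$; and since $Q^{d}$ has zero lower block one has $P^{j}(Q^{d})^{j+1}=0$, so only the terms $P^{j-1}R(Q^{d})^{j+1}$ survive in $T^{d}$, producing exactly the displayed sum with entries $D^{\pi}D^{j-1}C(A^{d})^{j+1}$ and $D^{\pi}D^{j-1}C(A^{d})^{j+2}B$. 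In the same vein the correction $-P^{d}\sum_{j}R^{j+1}\sum_{h}(Q^{d})^{h+j+1}S^{h}$ collapses (only $j=h=0$) to $-P^{d}RQ^{d}$, and combining it with $P^{d}$ gives the first displayed block $\left[\begin{smallmatrix}0&0\\-D^{d}CA^{d}&D^{d}-D^{d}C(A^{d})^{2}B\end{smallmatrix}\right]$; the leftover piece $\sum_{k}(P^{d})^{k+2}RT^{k}T^{\pi}$ becomes the stated sum once one observes $(P^{d})^{k+2}R=\left[\begin{smallmatrix}0&0\\(D^{d})^{k+2}C&0\end{smallmatrix}\right]$. The only genuine work here is this bookkeeping, and I expect the main obstacle to be tracking the collapse of the double sums carefully and confirming that $BC=0$ and $BD=0$ are exactly what force $P^{j}(Q^{d})^{j+1}$ and the superfluous $S$- and higher $R$-powers to vanish.
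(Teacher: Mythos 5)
Your proposal is correct and follows the paper's overall strategy --- realize $M$ as a pseudo-block matrix and read off $M^d$ from Corollary \ref{corPQRS-3.7} --- but with a genuinely different decomposition. The paper takes $P=\left[\begin{smallmatrix}0&0\\0&D\end{smallmatrix}\right]$, $Q=\left[\begin{smallmatrix}A&0\\0&0\end{smallmatrix}\right]$, $R=\left[\begin{smallmatrix}0&0\\C&0\end{smallmatrix}\right]$, $S=\left[\begin{smallmatrix}0&B\\0&0\end{smallmatrix}\right]$, so the hypotheses $BD=0$ and $BC=0$ are consumed by the conditions $SP=0$ and $SR=0$; you instead absorb $B$ into $Q$ and set $S=0$, so the same two hypotheses are consumed by $QP=0$ and $QR=0$. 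The trade-off is clear: with the paper's splitting, $Q^d=\left[\begin{smallmatrix}A^d&0\\0&0\end{smallmatrix}\right]$ is immediate, but the inner sums $\sum_h (Q^d)^{h+j+1}S^h$ retain two terms ($h=0,1$, since $S^2=0$), and it is precisely the $h=1$ terms $(Q^d)^{j+2}S$ that generate the entries $(A^d)^{j+2}B$ in the final formula; with your splitting those sums collapse to the single term $(Q^d)^{j+1}$, but you need one extra application of Lemma \ref{triangle} to get $Q^d=\left[\begin{smallmatrix}A^d&(A^d)^2B\\0&0\end{smallmatrix}\right]$, whose off-diagonal entry carries the $(A^d)^{j+2}B$ terms instead. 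Since $S+Q$ and $P^\pi(R+P)$ coincide in the two decompositions, both produce the same $T=\left[\begin{smallmatrix}A&B\\D^\pi C&D^\pi D\end{smallmatrix}\right]$ and hence the same final expression. Your bookkeeping is accurate --- the collapse of the sums via $S=0$ and $R^2=0$, the vanishing $P^j(Q^d)^{j+1}=0$ for $j\geq 1$ from $PQ^d=0$, the identification of $P^d-P^dRQ^d$ with the first displayed block, and $(P^d)^{k+2}R=\left[\begin{smallmatrix}0&0\\(D^d)^{k+2}C&0\end{smallmatrix}\right]$ --- with the one cosmetic remark that the surviving $j=0$ term $P^\pi Q^d$ is what supplies the leading block $\left[\begin{smallmatrix}A^d&(A^d)^2B\\0&0\end{smallmatrix}\right]$ of $T^d$, a point your write-up passes over slightly quickly.
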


\begin{proof} To show this result, we use $M=P+Q+R+S$ for
$$P=\left[\begin{array}{cc}
                   0 & 0 \\
                   0 & D
                 \end{array}\right],\quad
                 Q=\left[\begin{array}{cc}
                   A & 0 \\
                   0 & 0
                 \end{array}\right],\quad
                 R=\left[\begin{array}{cc}
                   0& 0 \\
                   C & 0
                 \end{array}\right],\quad
S=\left[\begin{array}{cc}
                   0 & B \\
                   0 & 0
                 \end{array}\right]$$
as desired.
\end{proof}

\bibliographystyle{amsplain}

\end{document}